\newtheorem{theorem}{Theorem}
\newtheorem{corollary}[theorem]{Corollary}
\newtheorem{lemma}[theorem]{Lemma}
\newtheorem{proposition}[theorem]{Proposition}
\newtheorem{conjecture}{Conjecture}
\def\CC{{\mathcal C}}
\def\JJ{{\mathcal J}}
\def\dd{\;\mbox{d}}
\def\NN{{\mathbb N}}
\def\RR{{\mathbb R}}
\def\deg{\mbox{deg}}
\def\Tr{\mbox{Tr}\;}
\newcommand\cut[1]{||#1||_{\square}}
\begin{document}

\title{Finitely forcible graph limits are universal\thanks{
This work has received funding from the European Research Council (ERC) under the European Union’s Horizon 2020 research and innovation programme (grant agreement No 648509). This publication reflects only its authors' view; the European Research Council Executive Agency is not responsible for any use that may be made of the information it contains.
In addition,
the first and the second authors were supported by the Leverhulme Trust 2014 Philip Leverhulme Prize,
the second author by the Engineering and Physical Sciences Research Council Standard Grant number EP/M025365/1, and
the third by the CNPq Science Without Borders grant number 200932/2014-4.}}

\author{Jacob W.~Cooper\thanks{Department of Computer Science, University of Warwick, Coventry CV4 7AL, UK. E-mail: {\tt j.w.cooper@warwick.ac.uk}.}\and
Daniel Kr\'al'\thanks{Faculty of Informatics, Masaryk University, Botanick\'a 68A, 602 00 Brno, Czech Republic, and Mathematics Institute, DIMAP and Department of Computer Science, University of Warwick, Coventry CV4 7AL, UK. E-mail: {\tt dkral@fi.muni.cz}.}\and
Ta\'isa L.~Martins\thanks{Instituto Nacional de Matem\'atica Pura e Aplicada (IMPA), Estrada Dona Castorina 110, Jardim Bot\^anico, CEP 22460-320 Rio de Janeiro, Brasil. The previous affiliation: Mathematics Institute, University of Warwick, Coventry CV4 7AL, UK. E-mail: {\tt t.lopes-martins@warwick.ac.uk}.}}
\date{}	
\maketitle

\begin{abstract}
The theory of graph limits represents large graphs by analytic objects called graphons.
Graph limits determined by finitely many graph densities, which are represented
by finitely forcible graphons, arise in various scenarios, particularly within extremal combinatorics.
Lov\'asz and Szegedy conjectured that all such graphons possess a simple structure,
e.g., the space of their typical vertices is always finite dimensional;
this was disproved by several ad hoc constructions of complex finitely forcible graphons.
We prove that any graphon is a subgraphon of a finitely forcible graphon.
This dismisses any hope for a result showing that finitely forcible graphons possess a simple structure, and
is surprising when contrasted with the fact that finitely forcible graphons form a meager set in the space of all graphons.
In addition, since any finitely forcible graphon represents the unique minimizer of some linear
combination of densities of subgraphs,
our result also shows that such minimization problems,
which conceptually are among the simplest kind within extremal graph theory,
may in fact have unique optimal solutions with arbitrarily complex structure.
\end{abstract}

Keywords: graph limits, extremal graph theory

\section{Introduction}

The theory of graph limits offers analytic tools to represent and analyze large graphs;
for an introduction to this area, we refer the reader to a recent monograph by Lov\'asz~\cite{bib-lovasz-book}.
Indeed, the theory also generated new tools and perspectives on many problems in mathematics and computer science.
For example, the flag algebra method of Razborov~\cite{bib-razborov07},
which bears close connections to convergent sequences of dense graphs,
catalyzed progress on many important problems in extremal combinatorics,
e.g.~\cite{bib-flag1, bib-flag2, bib-flagrecent, bib-flag3, bib-flag4, bib-flag5, bib-flag6,
bib-flag7, bib-flag8, bib-flag9, bib-flag10, bib-razborov07,bib-flag11, bib-flag12}.
In relation to computer science,
the theory of graph limits shed new light on property and parameter testing algorithms~\cite{bib-lovasz10+}.

Central to dense graph convergence is the analytic representation of the limit of a convergent sequence of dense graphs,
known as a \emph{graphon}~\cite{bib-borgs08+,bib-borgs+,bib-borgs06+,bib-lovasz06+};
we formally define this notion and other related notions in Section~\ref{sec-defs}.
We are interested in graphons that are uniquely determined (up to isomorphism) by finitely many graph densities,
which are called \emph{finitely forcible} graphons.
Such graphons are related to various problems from extremal graph theory and from graph theory in general.
For example,
for every finitely forcible graphon $W$,
there exists a linear combination of graph densities such that
the graphon $W$ is its unique minimizer.
Another result that we would like to mention in relation to finitely forcible graphons
is the characterization of quasirandom graphs in terms of graph densities by Thomason~\cite{bib-thomason, bib-thomason2},
which is essentially equivalent to stating that the constant graphon is finitely forcible by densities of $4$-vertex graphs;
also see~\cite{bib-chung89+,bib-rodl} for further results on quasirandom graphs.
Lov\'asz and S\'os~\cite{bib-lovasz08+} generalized this characterization
by showing that every step graphon, a multipartite graphon with quasirandom edge densities between its parts,
is finitely forcible.
Other examples of finitely forcible graphons are given in~\cite{bib-lovasz11+}.

Early examples of finitely forcible graphons indicated that
all finitely forcible graphons might possess a simple structure,
as formalized by Lov\'asz and Szegedy,
who conjectured the following~\cite[Conjectures~9 and 10]{bib-lovasz11+}.
\begin{conjecture}
\label{conj-compact}
The space of typical vertices of every finitely forcible graphon is compact.
\end{conjecture}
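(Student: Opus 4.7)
The plan is to prove that the metric space $(T(W), r_W)$ of typical vertices of a finitely forcible graphon $W$ is compact by verifying that it is both complete and totally bounded. Completeness should be the easier half: given a Cauchy sequence $\{x_n\}$ of typical vertices, the neighborhood functions $W(x_n, \cdot)$ form a Cauchy sequence in $L^1([0,1])$, so converge to some limit $g$; one then uses a measure-theoretic lifting, together with regularity of $W$, to realize $g$ as $W(x, \cdot)$ for some typical $x$. This step is standard but requires careful work with the measurable structure of $W$.

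For total boundedness, the strategy is to exploit finite forcibility quantitatively. Suppose $W$ is forced by densities $t(H_1, W), \ldots, t(H_k, W)$. For each $H_i$ with a distinguished vertex $v$, consider the rooted density $t^{\bullet}_{H_i, v}(x, W)$, measuring how $x$ participates in copies of $H_i$. The vector-valued map $\Phi(x) = (t^{\bullet}_{H_i, v}(x, W))_{i, v}$ takes values in a compact subset of a finite-dimensional Euclidean space. The plan is then to argue that $\Phi$ descends to a homeomorphism of $T(W)$ onto its image, so that compactness of the image yields compactness of $T(W)$. Continuity of $\Phi$ follows from continuity of $t(H, \cdot)$ with respect to the cut norm combined with a Fubini-type bound on how a change in a single coordinate neighborhood affects subgraph counts.

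The heart of the proposal, and its main obstacle, is showing that $\Phi$ separates typical vertices and admits a continuous inverse. The heuristic is that if $\Phi(x) = \Phi(y)$ then $x$ and $y$ are indistinguishable from every forcing density, so one could perform a measure-preserving local swap that keeps every $t(H_i, W)$ unchanged; uniqueness of $W$ then forces $W(x, \cdot) = W(y, \cdot)$ almost everywhere, giving injectivity in $T(W)$. Upgrading this to a continuous inverse requires a quantitative stability statement: two graphons with nearly equal densities of $H_1, \ldots, H_k$ must be close in cut distance (up to weak isomorphism). A non-effective version of such stability is immediate from the compactness of the graphon space together with uniqueness, but an effective modulus of continuity is what is actually needed to transport this to a bound of the form $\|W(x, \cdot) - W(y, \cdot)\|_1 \le \varepsilon$ whenever $\|\Phi(x) - \Phi(y)\| \le \delta(\varepsilon)$. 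This is the step I expect to be genuinely hard: bridging from a global non-quantitative uniqueness to a local Lipschitz-type control on neighborhood functions seems to demand structural information about finitely forcible graphons that goes beyond the defining property itself, and it is precisely here that the entire approach may either succeed by producing a structural theorem or collapse against a sufficiently pathological example.
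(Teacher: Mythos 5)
You are attempting to prove something that is false, and this very paper is a (strong) disproof of it. The item you were given is labeled a \emph{conjecture} precisely because Lov\'asz and Szegedy proposed it; it was subsequently disproved by ad hoc counterexamples in \cite{bib-inf,bib-comp}, and Theorem~\ref{thm-main} of the present paper refutes it in the strongest possible sense: since every graphon embeds as a subgraphon of a finitely forcible graphon, one can embed a graphon whose space of typical vertices is manifestly non-compact (indeed, infinite-dimensional and as wild as one likes), and the resulting finitely forcible graphon then inherits that pathology. So there is no proof to compare against; the paper's contribution is a construction, not an argument for compactness.

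Your own writeup correctly flags the step that cannot be repaired. You observe that to get total boundedness you would need a \emph{quantitative} stability statement: a modulus $\delta(\varepsilon)$ such that near-equality of the forcing densities forces cut-distance closeness, which you then want to localize to the neighborhood functions $W(x,\cdot)$. No such effective modulus follows from finite forcibility; the definition gives only a non-quantitative uniqueness, and the soft compactness argument you mention gives a qualitative stability for the \emph{global} cut distance, not a local Lipschitz control on $\|W(x,\cdot)-W(y,\cdot)\|_1$ in terms of $\|\Phi(x)-\Phi(y)\|$. The ``local swap'' heuristic also does not go through: swapping two individual points is a null modification, and swapping positive-measure neighborhoods of $x$ and $y$ is a genuine change to $W$ that is \emph{not} generally density-preserving merely because $\Phi(x)=\Phi(y)$ (rooted densities at a point do not determine how that point's neighborhood interacts with the rest of the graphon under such a surgery). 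So $\Phi$ need not separate typical vertices, let alone be a homeomorphism onto its image. The construction in Sections~\ref{sec-initial}--\ref{sec-white} of this paper, applied to a graphon $W_F$ with non-compact typical-vertex space, is an explicit witness that the step you identify as the ``heart'' of the proposal is unattainable.
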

\begin{conjecture}
\label{conj-dimension}
The space of typical vertices of every finitely forcible graphon has finite dimension.
\end{conjecture}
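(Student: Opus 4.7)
The plan is to refute Conjecture~\ref{conj-dimension} rather than try to prove it. A direct structural argument seems unlikely to succeed: there is no obvious reason why finitely many density constraints should restrict the space of typical vertices to a finite-dimensional object, and the ad hoc counterexamples alluded to in the introduction already hint that intricate structure can survive finite forcibility. The natural goal is therefore to push this phenomenon to an extreme and produce finitely forcible graphons of arbitrarily high complexity. The cleanest formulation, and the one matching the main theorem announced in the abstract, is the statement that every graphon $W^{*}$ occurs as a subgraphon of some finitely forcible graphon $W$. Once this is granted, one picks $W^{*}$ whose space of typical vertices is infinite-dimensional, and the resulting $W$ falsifies both Conjectures~\ref{conj-compact} and~\ref{conj-dimension}.

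To build such a $W$, I would adopt the decoration paradigm used in earlier finitely forcible examples. The underlying interval $[0,1]$ is split into one block carrying a faithful copy of $W^{*}$, together with many auxiliary blocks whose role is to pin down coordinates. For each vertex $x$ in the $W^{*}$-block, the coordinate of $x$ is recorded by the bipartite pattern of edges between $x$ and a fixed sequence of ``digit'' blocks; this is, in effect, a binary expansion of the coordinate encoded at the level of the graphon. One then writes down a finite list of subgraph densities that (i) force the multipartite block decomposition, in the Lov\'asz--S\'os style of forcing step structure; (ii) force the digit blocks to realize a measurable bijection between the $W^{*}$-block and $[0,1]$; and (iii) tie the densities of small subgraphs involving several vertices of the $W^{*}$-block together with their digit labels back to the densities of $W^{*}$, so that the restriction of $W$ to the $W^{*}$-block must coincide with $W^{*}$ almost everywhere.

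The main obstacle is item~(iii): finitely many density equations have to constrain a bipartite interaction so as to reproduce an essentially arbitrary prescribed graphon. Since a bipartite graphon carries infinitely many degrees of freedom, the finite density list can act only indirectly, coupling the $W^{*}$-block to the digit blocks in a way whose uniqueness propagates along the binary encoding down to arbitrarily fine scales. This is where the bulk of the technical work is likely to sit: a careful uniqueness analysis involving iterated refinements of an induced partition and a measure-theoretic identification argument that reads off $W^{*}$ from the coupled densities. Relative to that core, the remaining ingredients---forcing the partite skeleton, forcing quasirandom behavior within individual blocks, and forcing the encoding to act as a bijection---follow patterns already present in simpler finitely forcible constructions and can be bundled into the final forcing list.
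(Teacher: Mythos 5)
Your overall strategy matches the paper's: you do not try to prove the conjecture, you refute it by embedding an arbitrary graphon into a finitely forcible one, built as a partitioned graphon with decorated constraints, a coordinate block realizing a ``ruler,'' auxiliary addressing blocks, and an encoding of the target's structure into binary data. That is precisely the architecture of Theorem~\ref{thm-main}, from which Conjecture~\ref{conj-dimension} (and Conjecture~\ref{conj-compact}) fall immediately. Two points, however, deserve sharpening.

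First, your item~(ii) --- ``force the digit blocks to realize a measurable bijection between the $W^{*}$-block and $[0,1]$'' --- cannot be forced by finitely many density constraints. What the decorated constraints actually give is a \emph{measure-preserving} coordinate function $f_G$ on the $W^{*}$-block (defined from relative degrees into the ruler part $P$, via the Monotone Reordering Theorem). Measure preserving does not mean a measure-algebra isomorphism: the $\sigma$-algebra pulled back by $f_G$ may be a \emph{proper} sub-$\sigma$-algebra of the Borel sets of the block. Consequently, even after you succeed in prescribing all dyadic-square densities of $W$ in the $f_G$-coordinate (which is item~(iii)), you have pinned down only the conditional expectation of $W$ restricted to the block with respect to the $f_G$-pullback $\sigma$-algebra, not $W$ itself. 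This gap is exactly why the ``iterated refinement/martingale'' step you gesture at does not close on its own: the martingale converges to the conditional expectation, which equals $W^{*}$, but nothing yet forbids $W$ from being a nonconstant enlargement of that conditional expectation. The paper closes this by adding one more constraint --- forcing $d(\Gamma_4,\cdot)$ (equivalently, the non-induced $C_4$ density) on the block to equal $d(\Gamma_4,W_F)$ --- and proving a quantitative rigidity lemma (Lemma~\ref{lm-C4}, an extension of $d(\Gamma_4,W)\ge\cut{W}^4$ to step-graphon baselines via the matrix Lemmas~\ref{lm-C4-aux} and~\ref{lm-C4-aux-iter}) showing that any positive cut-distance deviation from the step approximation strictly increases $d(\Gamma_4,\cdot)$. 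This is a genuinely new ingredient, not something that ``follows patterns already present in simpler finitely forcible constructions.''

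Second, a more minor point of mechanism: the paper does not encode the \emph{coordinate of each vertex} as a binary expansion read off digit blocks. It instead encodes the \emph{countable family of dyadic-square densities} $\delta(d,s,t)$ of $W_F$ into a single real $r\in[0,1]$, embeds $r$ as the edge density of one tile, and builds an addressing apparatus (iterated checker tiles, a bijection $\varphi:\NN_0^4\to\NN_0$, tiles that split $I_k$ into $2^k$ and $2^{2k}$ sub-intervals) that lets decorated constraints reference $\delta(d,s,t)$ for each $(d,s,t)$. Your formulation in terms of per-vertex digit labels is plausible as a first sketch but is not what is implemented, and indeed the per-vertex encoding runs more directly into the non-injectivity issue described above. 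With those two corrections --- replacing ``force a bijection'' by ``force a measure-preserving coordinate plus a $C_4$-density rigidity constraint,'' and replacing per-vertex binary digits by an encoded sequence of dyadic densities with an addressing mechanism --- your plan becomes essentially the proof in the paper.
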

\noindent Both conjectures were disproved through counterexample constructions~\cite{bib-inf,bib-comp}.
A stronger counterexample to Conjecture~\ref{conj-dimension} was found in~\cite{bib-reg}:
Conjecture~\ref{conj-dimension} would imply that 
the number of parts of a weak $\varepsilon$-regular partition of a finitely forcible graphon is bounded by a polynomial of $\varepsilon^{-1}$
but the construction given in~\cite{bib-reg} almost matches the best possible exponential lower bound from~\cite{bib-conlon12+}.

The purpose of this paper is to show that finitely forcible graphons can have arbitrarily complex structure.
Our main result reads as follows.
\begin{theorem}
\label{thm-main}
For every graphon $W_F$,
there exists a finitely forcible graphon $W_0$ such that $W_F$ is a subgraphon of $W_0$, and
the subgraphon is formed by a $1/14$ fraction of the vertices of $W_0$.
\end{theorem}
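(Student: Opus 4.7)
The plan is to construct $W_0$ with its vertex set $[0,1]$ partitioned into $14$ parts of equal measure $1/14$, devoting one distinguished part $A$ to a rescaled copy of $W_F$ and the remaining $13$ parts to a rigid ``scaffolding'' that is itself finitely forcible and, in addition, pins down the values of $W_0$ on $A \times A$ to reproduce $W_F$. The overall strategy follows the template developed in the earlier constructions of complex finitely forcible graphons~\cite{bib-inf,bib-comp,bib-reg}, but replaces their fixed decoded targets by an arbitrary measurable kernel.

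Concretely, the first step is to force a coordinate system on $A$: vertices of $A$ are to be labeled, up to the measure-preserving equivalence that defines graphon isomorphism, by points of $[0,1]$ so that these labels can be read off from neighborhoods in the scaffolding. Several of the $13$ scaffolding parts are dedicated to implementing such a system --- typically via parts whose density profiles against $A$ encode the binary expansion of each vertex's coordinate, committed digit by digit in a tower of subparts. This machinery is by now standard in this line of work, and finitely many subgraph density constraints suffice to force the coordinate structure to be rigid.

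The delicate step is to install a ``universal decoder'' that couples the coordinate system to the values $W_0(x,y)$ for $x,y\in A$. The idea is to introduce a further group of scaffolding parts whose bipartite densities against $A$ simultaneously read the coordinates $x$ and $y$ and record edge-weighted averages of $W_0$ over dyadic rectangles of $A\times A$; by a finite number of linear density identities involving small substructures spanning the coordinate parts and $A$, these averages can be forced to equal the corresponding integrals of $W_F$. Since dyadic rectangle integrals determine a graphon almost everywhere, and since the coordinate system addresses dyadic rectangles of all scales through a single uniform gadget, these finitely many relations together with the rigidity of the scaffold force the restriction of $W_0$ to $A\times A$ to coincide (as a graphon) with $W_F$.

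The main obstacle is to arrange three incompatible-seeming requirements simultaneously: the scaffolding must be finitely forcible in isolation; it must be flexible enough to permit an arbitrary kernel on $A$; and it must be rigidly coupled to $A$ so that $W_0$ as a whole is unique up to weak isomorphism. Finiteness of the forcing set admits no direct constraint ``\,$W_0(x,y)=W_F(x,y)$\,'' for each pair $(x,y)$, so the arbitrary function $W_F$ itself must be incorporated as part of the geometric data that the scaffold decodes. The heart of the proof is therefore a uniqueness argument: starting from any graphon satisfying the prescribed finite list of densities, one shows that the scaffolding parts are forced into the intended configuration, the coordinate system is forced to be the identity, and the resulting almost-everywhere determination of $W_0$ on $A\times A$ through dyadic averages yields a weak isomorphism with the intended $W_0$. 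I expect this final rigidity analysis, carried out simultaneously for the whole construction, to be the technically hardest portion of the argument.
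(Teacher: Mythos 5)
Your high-level architecture matches the paper's: a rigid scaffold of auxiliary parts, a forced coordinate system, and encoding the dyadic-square integrals of $W_F$ as numerical data read off by the scaffold. (The paper uses ten parts, nine of size $1/14$ and one of size $5/14$, implements the coordinate system via a half-graphon on a designated part $P$ rather than ``towers of subparts,'' and stores the whole sequence of dyadic-square densities as the binary expansion of a single real number in one tile --- but these are design choices, not conceptual departures.)

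There is, however, a genuine gap in your final step, where you write that ``dyadic rectangle integrals determine a graphon almost everywhere'' and conclude from the dyadic identities alone that the restriction to $A\times A$ coincides with $W_F$. That implication holds for honest dyadic rectangles of $[0,1]^2$, but the rectangles your scaffold can address are preimages $f^{-1}(I^d(s))\times f^{-1}(I^d(t))$ of dyadic intervals under the coordinate function $f$ on $A$, and $f$ is only forced to be measure-preserving, not mod-$0$ injective. Consequently the $\sigma$-algebra generated by these preimages can be a proper sub-$\sigma$-algebra of the Borel sets on $A$, and the dyadic identities pin down only the conditional expectation of $W_0|_{A\times A}$ with respect to this $\sigma$-algebra, not $W_0|_{A\times A}$ itself: a candidate graphon may carry arbitrary extra structure inside each cell while satisfying every one of your linear identities. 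The paper closes exactly this gap by adding one more constraint, fixing the homomorphism density of $C_4$ on $A\times A$ to equal $d(\Gamma_4,W_F)$, and then invoking an extremality argument (Lemma~\ref{lm-C4} together with Proposition~\ref{prop-regularity} and a martingale-convergence step) showing that a step graphon strictly minimizes the $C_4$-density among all graphons with the same cell averages; any deviation in cut norm from the intended limit would strictly increase $d(\Gamma_4,\cdot)$, contradicting the forced equality. Your sketch is missing both this constraint and the extremal lemma that makes it bite, and without them the uniqueness argument does not go through.
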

Theorem~\ref{thm-main} contrasts with~\cite[Theorem 7.12]{bib-lovasz11+}, which states that
the set of finitely forcible graphons is meager in the space of all graphons.
In addition,
since every finitely forcible graphon is the unique minimizer of some linear combination of densities of
subgraphs (see Proposition~\ref{prop-extr}), Theorem~\ref{thm-main} also shows that
optimal solutions of problems seeking to minimize a linear combination of densities of subgraphs,
which are among the simplest stated problems in extremal graph theory,
may have unique optimal solutions with highly complex structure.

Theorem~\ref{thm-main} also immediately implies that both conjectures presented above are false
since we can embed graphons not having the desired properties in a finitely forcible graphon.
By considering a graphon containing appropriately scaled copies of graphons
corresponding to the lower bound construction of Conlon and Fox from~\cite{bib-conlon12+},
which were described in~\cite{bib-reg}, we also obtain the following.
\begin{corollary}
\label{cor-main}
For every non-decreasing function $f:\RR\to\RR$ tending to infinity,
there exist a finitely forcible graphon $W$ and positive reals $\varepsilon_i$ tending to $0$
such that every weak $\varepsilon_i$-regular partition of $W$
has at least $2^{\Omega\left(\frac{\varepsilon_i^{-2}}{f(\varepsilon_i^{-1})}\right)}$ parts.
\end{corollary}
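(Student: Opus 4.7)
The plan is to apply Theorem~\ref{thm-main} to a target graphon $W_F$ that already exhibits the desired super-polynomial lower bound on the number of parts of any weak regular partition. The key input, supplied by the Conlon--Fox construction~\cite{bib-conlon12+} in the graphon form used in~\cite{bib-reg}, is a family of graphons $H_\delta$ along a sequence $\delta\to 0$ such that every weak $\delta$-regular partition of $H_\delta$ uses at least $2^{\Omega(\delta^{-2})}$ parts. The idea is to glue affinely rescaled copies of such $H_\delta$'s into disjoint vertex blocks of a single auxiliary graphon $W_F$ and then invoke Theorem~\ref{thm-main} to realize $W_F$ inside a finitely forcible $W$.

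Concretely, I would fix weights $\alpha_i>0$ with $\sum_i \alpha_i\le 1$, partition $[0,1]$ into intervals $J_i$ of length $\alpha_i$, and let $W_F$ be the graphon that equals the affinely rescaled $H_{\delta_i}$ on $J_i\times J_i$ and $0$ elsewhere. Theorem~\ref{thm-main} then produces a finitely forcible graphon $W$ containing $W_F$ on a set of measure $1/14$, so each $H_{\delta_i}$ sits inside $W$ on a block of measure $\alpha_i/14$. The central observation is that restricting a weak $\varepsilon$-regular partition of $W$ to such a block and rescaling back to $[0,1]$ yields a weak $\delta$-regular partition of $H_{\delta_i}$ with $\delta=O(\varepsilon/\alpha_i^2)$, because the cut norm picks up a factor of $\alpha_i^{-2}$ under the rescaling; furthermore, the number of parts can only decrease under restriction.

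Setting $\varepsilon_i:=c\,\alpha_i^2\delta_i$ for a small absolute constant $c$ then forces every weak $\varepsilon_i$-regular partition of $W$ to induce a weak $\delta_i$-regular partition of $H_{\delta_i}$, and hence to use at least $2^{\Omega(\delta_i^{-2})}=2^{\Omega(\alpha_i^4\varepsilon_i^{-2})}$ parts. To match the target bound $2^{\Omega(\varepsilon_i^{-2}/f(\varepsilon_i^{-1}))}$ it then suffices to arrange $\alpha_i\ge f(\varepsilon_i^{-1})^{-1/4}$ while keeping $\sum_i \alpha_i\le 1$. Because $f\to\infty$, one may take a sparse subsequence of indices along which $f(\varepsilon_i^{-1})$ is at least, say, $2^{4i}$, set $\alpha_i:=2^{-i}$, and choose $\delta_i$ small enough to push $\varepsilon_i^{-1}$ into this range; both summability of $(\alpha_i)$ and the required lower bound on $\alpha_i$ are then simultaneously satisfied.

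The only real obstacle is this scaling bookkeeping: the $\alpha_i^{-2}$ inflation of the cut norm under the rescaling of the embedded block translates into an $\alpha_i^4$ loss in the exponent of the lower bound, and the $f$-factor in the statement is precisely what is available to absorb this loss. Once the parameter sequences $(\alpha_i,\delta_i,\varepsilon_i)$ are chosen as above, the corollary follows directly from Theorem~\ref{thm-main} combined with the Conlon--Fox lower bound transported to the graphon setting in~\cite{bib-reg}.
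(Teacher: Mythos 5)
Your proposal is correct and follows precisely the route the paper indicates in its one-sentence sketch before the corollary: construct $W_F$ by placing affinely rescaled copies of the Conlon--Fox lower-bound graphons from \cite{bib-conlon12+} (in the graphon form given in \cite{bib-reg}) on disjoint diagonal blocks, apply Theorem~\ref{thm-main}, and use the fact that restricting a weak $\varepsilon$-regular partition to a block of measure $\beta$ and rescaling yields a weak $O(\varepsilon/\beta^2)$-regular partition with no more parts. Your bookkeeping --- the $\alpha_i^{-2}$ inflation of the cut norm giving an $\alpha_i^4$ loss in the exponent, absorbed by choosing a sparse subsequence where $f(\varepsilon_i^{-1})\ge 2^{4i}$ so that $\alpha_i=2^{-i}$ is both summable and large enough --- is exactly what makes the $f$-factor in the statement necessary and sufficient, and your parameter choices are consistent.
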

Since every fixed graphon has weak $\varepsilon$-regular partitions with $2^{o(\varepsilon^{-2})}$ parts,
Corollary~\ref{cor-main} gives the best possible dependance on $\varepsilon^{-1}$.

The proof of Theorem~\ref{thm-main} builds on the methods introduced in~\cite{bib-comp}, which were further developed and formalized in~\cite{bib-inf}.
In particular, the proof uses the technique of decorated constraints, which we present in Subsection~\ref{sec-constraints}. 
The main idea of the proof is the following. The graphon $W_F$ is determined up to a set of measure zero by its density
in squares with coordinates being the inverse powers of two. 
The countable sequence of such densities can be encoded into a single real number between $0$ and $1$,
which will be embedded as the density of a suitable part of the graphon $W_0$.
We then set up the structure of $W_0$ in a way that
this encoding restricts the densities inside another part of $W_0$ rendering $W_F$ unique up to a set of measure zero.
While this approach seems uncomplicated upon first glance, the proof hides a variety of additional ideas and technical details.
The reward is a result enabling the embedding of any graphon in a finitely forcible graphon with no additional effort.

\section{Preliminaries}
\label{sec-defs}

We devote this section to introducing notation used throughout the paper.
Let us start with some basic general notation.
The set of integers from $1$ to $k$ will be denoted by $[k]$,
the set of all positive integers by $\NN$ and the set of all non-negative integers by $\NN_0$.
All measures considered in this paper are the Borel measures on $\RR^d$, $d\in\NN$.
If a set $X\subseteq\RR^d$ is measurable, then $|X|$ denotes its measure, and
if $X$ and $Y$ are two measurable sets, then we write $X\sqsubseteq Y$ if $|X\setminus Y|=0$.

The \emph{order} of a graph $G=(V,E)$, denoted by $|G|$, is the number of its vertices, and
its \emph{size}, denoted by $||G||$, is the number of edges. 
Given two graphs $H$ and $G$,
the \emph{density} of $H$ in $G$
is the probability that a uniformly chosen $|H|$-tuple of vertices of $G$ induces a subgraph isomorphic to $H$;
the density of $H$ in $G$ is denoted by $d(H,G)$.
We adopt the convention that if $|H|>|G|$, then $d(H,G)=0$.

A sequence of graphs $(G_n)_{n\in \NN}$
is \emph{convergent} if the sequence $ d(H,G_n)$ converges for every graph $H$.
We will require that the orders of graphs in a convergent sequence tend to infinity.
A convergent sequence of graphs can be associated with an analytic limit object, which is called a graphon.
A \emph{graphon} is a symmetric measurable function $W$ from the unit square $[0,1]^2$ to the unit interval $[0,1]$,
where \emph{symmetric} refers to the property that $W(x,y) = W(y,x)$ for all $x, y \in [0, 1]$.
In what follows, we will often refer to the points of $[0,1]$ as \emph{vertices}.
One may view the values of $W(x,y)$ as the density between different parts of a large graph represented by $W$.
To aid the transparency of our ideas, we often include a visual representation of graphons that we consider:
the domain of a graphon $W$ is represented as a unit square $[0,1]^2$ with the origin $(0,0)$ in the top left corner, and
the values of $W$ are represented by an appropriate shade of gray (ranging from white to black),
with $0$ represented by white and $1$ by black.

Given a graphon $W$, a \emph{$W$-random graph} of order $n$ is a graph obtained from $W$
by sampling $n$ vertices $v_1, v_2,\ldots, v_n \in [0, 1]$ independently and uniformly at random and
joining vertices $v_i$ and $v_j$ by an edge with probability $W(v_i, v_j)$ for all $i,j\in [n]$.
The density of a graph $H$ in a graphon $W$, denoted by $d(H, W)$,
is the probability that a $W$-random graph of order $|H|$ is isomorphic to $H$.
Note that the expected density of $H$ in a $W$-random graph of order $n\ge |H|$ is equal to $d(H,W)$.
We say that a convergent sequence $(G_n)_{n\in \NN}$ converges to a graphon $W$
if $$\lim_{n\to\infty} d(H,G_n)=d(H,W)$$
for every graph $H$.
It is not hard to show that if $W$ is a graphon,
then the sequence of $W$-random graphs with increasing orders
is convergent with probability one and the graphon $W$ is its limit.

We now present graphon analogues of several graph theoretic notions.
The \emph{degree} of a vertex $x \in [0, 1]$ is defined as
$$\deg_W(x) = \int_{[0,1]} W(x,y) \dd y.$$
Note that the degree is well-defined for almost all vertices of $W$ and
if $x$ is chosen to be a vertex of an $n$-vertex $W$-random graph, then its expected degree is $(n-1)\cdot\deg_W(x)$.
When it is clear from the context which graphon we are referring to, we will omit the subscript,
i.e., we just write $\deg(x)$ instead of $\deg_W(x)$.
We define the \emph{neighborhood} $N_W(x)$ of a vertex $x\in [0,1]$ in a graphon $W$
as the set of vertices $y\in [0,1]$ such that $W(x,y)>0$.
In our considerations,
we will often analyze a restriction of a graphon to the substructure induced by a pair of measurable subsets $A$ and $B$ of $[0,1]$.
If $W$ is a graphon and $A$ is a non-null measurable subset of $[0,1]$,
then the \emph{relative degree} of a vertex $x\in [0,1]$ with respect to $A$ is
$$\deg_W^A(x) = \frac{\int_A W(x,y) \dd y}{|A|},$$
i.e., the measure of the neighbors of $x$ in $A$ normalized by the measure of $A$.
Similarly, $N_W^A(x)=N_W(x)\cap A$ is the \emph{relative neighborhood} of $x$ with respect to $A$.
Note that $\deg_W^A(x)\cdot |A|\le |N_W^A(x)|$ and the inequality can be strict.
Again, we drop the subscripts when $W$ is clear from the context.

Two graphons $W_1$ and $W_2$ are \emph{weakly isomorphic} if $d(H, W_1) = d(H, W_2)$ for every graph $H$.
Borgs, Chayes and Lov\'asz~\cite{bib-borgs10+} have shown that two graphons $W_1$ and $W_2$ are weakly isomorphic
if and only if there exist measure preserving maps $\varphi_1, \varphi_2: [0, 1]\to [0, 1]$ such that
$W_1(\varphi_1(x),\varphi_1(y))=W_2(\varphi_2(x),\varphi_2(y))$ for almost every $(x, y) \in [0, 1]^2$.
Graphons that can be uniquely determined up to a weak isomorphism by fixing the densities of a finite set of graphs
are called \emph{finitely forcible graphons} and are the central object of this paper.
Observe that a graphon $W$ is finitely forcible if and only if
there exist graphs $H_1,\ldots,H_k$ such that
if a graphon $W'$ satisfies $d(H_i, W')=d(H_i,W)$ for $i\in [k]$,
then $d(H, W')=d(H,W)$ for every graph $H$.
A less obvious characterization of finitely forcible graphons is the following.
\begin{proposition}
\label{prop-extr}
A graphon $W$ is finitely forcible if and only if
there exist graphs $H_1,\ldots,H_k$ and reals $\alpha_1,\ldots,\alpha_k$ such that
$$\sum_{i=1}^k\alpha_i d(H_i,W)\le\sum_{i=1}^k\alpha_i d(H_i,W')$$
for every graphon $W'$ and the equality holds only if $W$ and $W'$ are weakly isomorphic.
\end{proposition}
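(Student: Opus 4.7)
The plan is to prove each direction of the equivalence separately. The backward direction is immediate: if graphs $H_1,\dots,H_k$ and reals $\alpha_1,\dots,\alpha_k$ as in the statement exist, then any graphon $W'$ satisfying $d(H_i,W')=d(H_i,W)$ for every $i$ realizes the equality $\sum_i\alpha_i d(H_i,W')=\sum_i\alpha_i d(H_i,W)$ and must therefore be weakly isomorphic to $W$; hence $W$ is finitely forced by $H_1,\dots,H_k$.

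For the forward direction, assume $W$ is finitely forcible by graphs $H_1,\dots,H_k$. I would consider the non-negative functional
$$\Phi(W')=\sum_{i=1}^k\bigl(d(H_i,W')-d(H_i,W)\bigr)^2,$$
which vanishes exactly when $d(H_i,W')=d(H_i,W)$ for every $i$, that is, by finite forcibility, exactly when $W'$ is weakly isomorphic to $W$. The key step is to rewrite $\Phi(W')$ as a linear combination of graph densities in $W'$ plus a constant independent of $W'$; the inequality required by the proposition will then be a direct reformulation of $\Phi(W')\ge 0$.

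The rewriting relies on the standard fact that a product of two induced densities is itself a linear combination of densities: for graphs $H'$ and $H''$, sampling $|H'|+|H''|$ vertices of $W'$ independently and splitting them into a prefix of size $|H'|$ and a suffix of size $|H''|$ gives
$$d(H',W')\cdot d(H'',W')=\sum_{|G|=|H'|+|H''|}c_{H',H''}(G)\cdot d(G,W'),$$
where $c_{H',H''}(G)$ is the probability that a uniformly random partition of $V(G)$ into blocks of sizes $|H'|$ and $|H''|$ induces subgraphs isomorphic to $H'$ and $H''$. Applying this identity to each $d(H_i,W')^2$ and expanding the squares in $\Phi$ yields $\Phi(W')=\sum_j\alpha_j\,d(G_j,W')+C$ for explicit graphs $G_j$, reals $\alpha_j$ and a constant $C$ not depending on $W'$. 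Substituting $W'=W$ gives $C=-\sum_j\alpha_j\,d(G_j,W)$, and the inequality $\Phi(W')\ge 0$ becomes precisely the inequality in the proposition, with equality characterising weak isomorphism as above.

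No serious obstacle arises here; the only technical point is the product-of-densities identity, which is a routine consequence of the definition of a $W$-random graph and of the independence of the sampled vertices.
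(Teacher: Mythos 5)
Your proof is correct. The paper states Proposition~\ref{prop-extr} without proof, so there is no argument in the text to compare against; the sum-of-squares functional $\Phi(W')=\sum_{i}\bigl(d(H_i,W')-d(H_i,W)\bigr)^2$ together with the product-to-linear-combination expansion of induced densities is the standard way to obtain this equivalence. The backward direction is, as you say, immediate. For the forward direction, the one place worth being explicit is the justification of the product identity: when you condition the $(|H'|+|H''|)$-vertex $W'$-random graph on its isomorphism type $G$, the fact that the sampled points are i.i.d.\ makes the distribution of the labeled graph exchangeable, hence uniform over all labelings of $G$; this is exactly what identifies your coefficient $c_{H',H''}(G)$ as the probability that a uniformly random \emph{ordered} partition of $V(G)$ into parts of sizes $|H'|$ and $|H''|$ induces $H'$ and $H''$, respectively. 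With that observation, each $d(H_i,W')^2$ becomes a finite linear combination of induced densities of graphs of order $2|H_i|$, so $\Phi(W')=\sum_j\alpha_j d(G_j,W')+C$ over finitely many graphs $G_j$ of order at most $2\max_i|H_i|$; substituting $W'=W$ kills $C$, and $\Phi(W')\ge 0$ with equality precisely on the weak-isomorphism class of $W$ (by finite forcibility) is exactly the inequality in the proposition.
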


\subsection{Finite forcibility and decorated constraints}
\label{sec-constraints}

Decorated constraints have been introduced and developed in~\cite{bib-inf,bib-comp} as
a method of showing finite forcibility of graphons.
This method uses the language of the flag algebra method of Razborov,
which, as we have mentioned earlier, has had many substantial applications in extremal combinatorics.
We now present the notion of decorated constraints, partially following the lines of~\cite{bib-inf} in our exposition.

A \emph{density expression} is iteratively defined as follows:
a real number or a graph are density expressions, and
if $D_1$ and $D_2$ are density expressions, then so are $D_1 + D_2$ and $D_1 \cdot D_2$.
The value of a density expression with respect to a graphon $W$
is the value obtained by substituting for each graph its density in the graphon $W$.
A \emph{constraint} is an equality between two density expressions.
A graphon $W$ \emph{satisfies} a constraint if the density expressions on the two sides of the constraints have the same value.
If $\CC$ is a finite set of constraints such that 
there exists a unique (up to weak isomorphism) graphon $W$ that satisfies all constraints in $\CC$,
then the graphon $W$ is finitely forcible~\cite{bib-comp};
in particular, $W$ can be forced by specifying the densities of graphs appearing in the constraints in $\CC$.

A graphon $W$ is said to be \emph{partitioned}
if there exist $k \in \NN$, positive reals $a_1,\ldots, a_k$ with $a_1+\cdots+a_k=1$, and
distinct reals $d_1,\ldots,d_k\in [0, 1]$, such that the set of vertices in $W$ with degree $d_i$ has measure $a_i$.
The set of all vertices with degree $d_i$ will be referred to as a \emph{part};
the \emph{size} of a part is its measure and its \emph{degree} is the common degree of its vertices.
The following lemma was proved in~\cite{bib-inf,bib-comp}.

\begin{lemma}
\label{lm-partition}
Let $a_1,\ldots,a_k$ be positive real numbers summing to one and let $d_1,\ldots,d_k\in [0,1]$ be distinct reals.
There exists a finite set of constraints $\CC$ such that 
any graphon satisfying all constraints in $\CC$ is a partitioned graphon with parts of sizes $a_1,\ldots,a_k$ and degrees $d_1,\ldots,d_k$, and
every partitioned graphon with parts of sizes $a_1,\ldots,a_k$ and degrees $d_1,\ldots,d_k$ satisfies all constraints in $\CC$.
\end{lemma}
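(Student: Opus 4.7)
The plan is to characterise the degree distribution of a partitioned graphon via finitely many moment conditions and then to encode each of those as a constraint on graph densities. For a graphon $W$, let $\mu_W$ denote the pushforward of Lebesgue measure on $[0,1]$ under the degree map $x\mapsto\deg_W(x)$, and set $\mu^{\star}=\sum_{i=1}^k a_i\delta_{d_i}$. Since the $d_i$ are distinct, a graphon is partitioned with part sizes $a_1,\dots,a_k$ and part degrees $d_1,\dots,d_k$ if and only if $\mu_W=\mu^{\star}$, so it suffices to exhibit a finite set of constraints $\CC$ whose solutions are exactly the graphons satisfying $\mu_W=\mu^{\star}$.

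The quantity to track is the $j$-th moment
$$m_j(W)=\int_{[0,1]}\deg_W(x)^j\dd x=\int_{[0,1]^{j+1}}W(x,y_1)\cdots W(x,y_j)\dd x\dd y_1\cdots\dd y_j,$$
which is the homomorphism density of the star $K_{1,j}$ in $W$. By the standard conversion between homomorphism and induced densities, $m_j(W)$ can be written as a fixed, explicitly computable linear combination of the induced densities $d(H,W)$ over graphs $H$ on at most $j+1$ vertices. Consequently each equation of the form $m_j(W)=\alpha$ is a constraint in the sense of Subsection~\ref{sec-constraints}.

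I would therefore take $\CC$ to be the $2k$ constraints $m_j(W)=\sum_{i=1}^k a_i d_i^j$ for $j=1,\dots,2k$. Any partitioned graphon with the prescribed structure has $\mu_W=\mu^{\star}$ and so satisfies every constraint in $\CC$, which gives one direction of the lemma. For the converse, assume a graphon $W$ satisfies $\CC$ and set $p(x)=\prod_{i=1}^k(x-d_i)$. Since $p^2$ has degree $2k$, the integral $\int_{[0,1]}p(\deg_W(x))^2\dd x$ is a linear combination of $m_0(W),\dots,m_{2k}(W)$, and the constraints of $\CC$ together with the automatic identity $m_0(W)=1$ force this integral to equal $\int p^2\dd\mu^{\star}=\sum_i a_i p(d_i)^2=0$. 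This yields $\deg_W(x)\in\{d_1,\dots,d_k\}$ for almost every $x\in[0,1]$.

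Writing $b_i=|\{x\in[0,1]:\deg_W(x)=d_i\}|$, the moment identities then reduce to $\sum_i b_i d_i^j=\sum_i a_i d_i^j$ for every $j\in\{0,1,\dots,2k\}$. Taking just $j=0,1,\dots,k-1$ yields a square system with Vandermonde coefficient matrix in the distinct nodes $d_1,\dots,d_k$, which is invertible and forces $b_i=a_i$ for each $i$, so $W$ is partitioned with the required sizes and degrees. The one step I expect to require some care is the explicit translation of $m_j(W)$ into a linear combination of the induced densities $d(H,W)$: this is routine flag-algebra bookkeeping over the automorphism structure of graphs on $j+1$ vertices, but it is the only place where the analytic moment computation meets the constraint formalism in a nontrivial way.
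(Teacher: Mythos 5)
Your proof is correct and uses the standard moment argument: express the degree moments $m_j(W)$ (equivalently the star homomorphism densities) as density expressions, force $\int_{[0,1]}\prod_{i=1}^k(\deg_W(x)-d_i)^2\dd x=0$ via the moments up to order $2k$ to pin the degree support to $\{d_1,\dots,d_k\}$, and then solve the Vandermonde system to recover the part sizes $a_i$. The paper does not prove this lemma itself but cites~\cite{bib-inf,bib-comp}, and the argument there is essentially the same, so your proposal matches the intended proof.
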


We next introduce a formally stronger version of constraints, called decorated constraints.
Fix $a_1,\ldots,a_k$ and $d_1,\ldots,d_k$ as in Lemma~\ref{lm-partition}.
A \emph{decorated} graph is a graph $G$ with $m\leq|G|$ distinguished vertices labeled from 1 to $m$, which are called \emph{roots}, and
with each vertex assigned one of the $k$ parts, which is referred to as the decoration of a vertex.
Note that the number $m$ can be zero in the definition of a decorated graph, i.e., a decorated graph can have no roots.
Two decorated graphs are \emph{compatible} if the subgraphs induced by their roots are isomorphic
through an isomorphism preserving the labels (the order of the roots) and the decorations (the assignment of parts).
A \emph{decorated constraint} is an equality between two density expressions that contain decorated graphs instead of ordinary graphs and
all the decorated graphs appearing in the constraint are compatible.

Consider a partitioned graphon $W$ with parts of sizes $a_1,\ldots,a_k$ and degrees $d_1,\ldots,d_k$, and
a decorated constraint $C$.
Let $H_0$ be the (decorated) graph induced by the roots of the decorated graphs in the constraint, and
let $v_1,\ldots,v_m$ be the roots of $H_0$.
We say that the graphon $W$ \emph{satisfies} the constraint $C$
if the following holds for almost every $m$-tuple $x_1,\ldots,x_m\in [0,1]$ such that
$x_i$ belongs to the part that $v_i$ is decorated with, $W(x_i,x_j)>0$ for every edge $v_iv_j$ and
$W(x_i,x_j)<1$ for every non-edge $v_iv_j$:
if each decorated graph $H$ in $C$ is replaced with the probability that a $W$-random graph is the graph $H$
conditioned on the event that the roots are chosen as the vertices $x_1,\ldots,x_m$ and they induce the graph $H_0$, and that
each non-root vertex is randomly chosen from the part of $W$ that is decorated with,
then the left and right hand sides of the constraint $C$ have the same value.

We now give an example of evaluating a decorated constraint. 
Consider a partitioned graphon $W$, which is depicted in Figure~\ref{fig:exConst}, with parts $A$ and $B$ each of size $1/2$;
the graphon $W$ is equal to $1/2$ on $A^2$, to $1/3$ on $A\times B$, and to $1$ on $B^2$.
Let $H$ be the decorated graph with two adjacent roots both decorated with $A$ and two adjacent non-root vertices $v_1$ and $v_2$ both decorated with $B$
such that $v_1$ is adjacent to only one of the roots and $v_2$ is adjacent to both roots;
the decorated graph $H$ is also depicted in Figure~\ref{fig:exConst}.
If $H$ appears in a decorated constraint,
then its value is independent of the choice of the roots in the part $A$ and is always equal to $2/81$,
which is the probability as defined in the previous paragraph.

\begin{figure}[htbp]
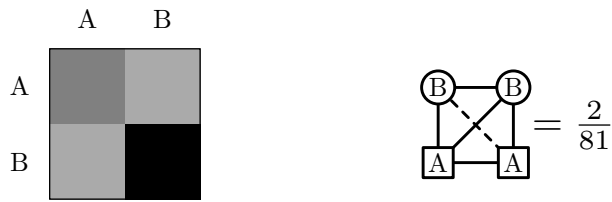

	\begin{center}
		\hskip -50mm \epsfbox{ffuniversal-48.mps} 
		\vskip -18mm \hskip 60mm \epsfbox{ffuniversal-150.mps}		
	\end{center}
	\caption{An example of evaluating a decorated constraint.
	         The root vertices are depicted by squares and the non-root vertices by circles.
		 The graphon is equal to $1/2$ on $A^2$, to $1/3$ on $A\times B$, and to $1$ on $B^2$.\label{fig:exConst}}
\end{figure}

Note that the condition on the $m$-tuple $x_1,\ldots,x_m$ is equivalent to that
there is a positive probability that a $W$-random graph with the vertices $x_1,\ldots,x_m$ is $H_0$.
Also note that, unlike in the definition of the density of a graph in a graphon,
we do not allow permuting any vertices.
For example, if $W$ is the graphon (with a single part) that is equal to $p\in [0,1]$ almost everywhere,
then the cherry $K_{1,2}$ with each vertex decorated with the single part of $W$
would take the value $p^2(1-p)$ in a decorated constraint.

The next lemma, proven in~\cite{bib-comp}, asserts that
every decorated constraint is equivalent to a non-decorated constraint.
\begin{lemma}
\label{lm-decorated}
Let $k\in \NN$, let $a_1,\ldots,a_k$ be positive real numbers summing to one, and
let $d_1,\ldots,d_k$ be distinct reals between zero and one.
For every decorated constraint $C$, there exists a constraint $C'$ such that
any partitioned graphon $W$ with parts of sizes $a_1,\ldots,a_k$ and degrees $d_1,\ldots,d_k$
satisfies $C$ if and only if it satisfies $C'$.
\end{lemma}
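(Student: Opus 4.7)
The plan is to prove Lemma~\ref{lm-decorated} in two stages. In the first, I reduce the pointwise decorated equality to a single scalar equation by integrating a squared difference against a suitable weight. In the second, I convert the resulting scalar quantities---integrals of products of $W$ and $(1-W)$ factors over tuples of vertices restricted to specified parts---into polynomials in ordinary induced graph densities of $W$, exploiting the distinctness of the part degrees $d_1,\ldots,d_k$.

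For the first stage, write the constraint $C$ as $D(x) = 0$, where $D$ is the decorated density expression equal to the difference of the two sides of $C$ and the common root subgraph $H_0$ has $m$ vertices decorated with part indices $i_1,\ldots,i_m$. Set
$$\rho(x) = \prod_{e \in E(H_0)} W(x_e) \prod_{e \notin E(H_0),\, e \subseteq [m]} \bigl(1 - W(x_e)\bigr),$$
so that $\rho$ is positive precisely on the set of root tuples for which $D$ is defined and vanishes elsewhere. Since $D^2 \rho \ge 0$, the constraint $C$ is equivalent to
$$\int_{P_{i_1} \times \cdots \times P_{i_m}} D(x)^2 \rho(x)\dd x = 0.$$
Expanding $D^2$ via the multiplication identity $\mathrm{val}_{H_1}(x) \cdot \mathrm{val}_{H_2}(x) = \sum_{H'} \mathrm{val}_{H'}(x)$, where $H'$ ranges over decorated graphs obtained by placing arbitrary edges between the non-roots of $H_1$ and $H_2$ inside their glued disjoint union, we see that $D^2 \rho$ is a real-coefficient combination of expressions $\mathrm{val}_H(x) \rho(x)$ for decorated graphs $H$ whose induced subgraph on the roots is $H_0$. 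Integrating each such term over the roots in their parts yields, up to an explicit factor depending only on the $a_j$'s, a scalar of the form
$$J(H) = \int_{\prod_v P_{\phi_H(v)}} \prod_{e \in E(H)} W(x_e) \prod_{e \notin E(H)} \bigl(1 - W(x_e)\bigr)\dd x,$$
so that the constraint reduces to a polynomial identity $P\bigl(\{J(H)\}\bigr) = 0$ in part-restricted induced densities.

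For the second stage, each $J(H)$ must be written as a polynomial in ordinary induced graph densities of $W$. Inclusion-exclusion on the $(1-W)$ factors expresses $J(H)$ as a signed sum of restricted homomorphism integrals $J^t(G,\phi) = \int_{\prod_v P_{\phi(v)}} \prod_{e \in E(G)} W(x_e)\dd x$ for graphs $G$ on $V(H)$ with $E(G) \supseteq E(H)$. To isolate $J^t(G,\phi)$, attach $r_v$ new pendant leaves to each vertex $v$ of $G$; the homomorphism density of the resulting extended graph over all of $W$ equals
$$\sum_{\psi : V(G) \to [k]} \prod_{v \in V(G)} d_{\psi(v)}^{r_v}\cdot J^t(G,\psi),$$
since a pendant leaf attached to $v$ integrates to $\deg_W(v) = d_{\psi(v)}$ when $v$ is sampled from $P_{\psi(v)}$. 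Letting each $r_v$ range over $\{0,\dots,k-1\}$ gives a linear system whose coefficient matrix is the $|V(G)|$-fold tensor product of the Vandermonde matrix $[d_i^r]$; because the $d_i$ are distinct, this matrix is invertible, so each $J^t(G,\psi)$ is an explicit real combination of homomorphism densities of the extended graphs. Converting homomorphism to induced densities via the standard polynomial identities then writes $J(H)$ as a density expression, and substituting back produces the non-decorated constraint $C'$. The main obstacle is carrying out this Vandermonde-based conversion uniformly across all decorated graphs arising from $D^2 \rho$ and bookkeeping the resulting real coefficients; no conceptual idea beyond the distinct-degree probing is required.
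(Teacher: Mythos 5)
Your argument is correct. Note that the paper itself does not prove this lemma---it cites Glebov, Kr\'al' and Volec~\cite{bib-comp} and uses the statement as a black box---so there is no in-paper proof to compare against. Your two-stage reduction is the standard approach: the passage from the pointwise decorated identity to the scalar equation $\int D^2\rho=0$ (valid because $D^2\rho\ge 0$ and $\rho$ is positive exactly on the admissible root tuples), the flag-algebra expansion of $D^2$ into a linear combination of $\mathrm{val}_{H'}$'s, inclusion--exclusion to pass to part-restricted homomorphism integrals, and the pendant-leaf probing whose coefficient matrix is a tensor power of the Vandermonde matrix $[d_i^r]_{r,i}$, invertible precisely because $d_1,\ldots,d_k$ are distinct. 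The only points worth flagging are that the Vandermonde inversion (and the $a_j$-dependent normalizations) produce real constants, so $C'$ is indeed a legal constraint, and that the equivalence $C\Leftrightarrow C'$ is claimed only for graphons partitioned with exactly these sizes and degrees, which is precisely the hypothesis under which $\deg_W(x)\in\{d_1,\ldots,d_k\}$ almost everywhere and the probing identity is valid.
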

In particular, if a graphon $W$ is a unique partitioned graphon up to weak isomorphism that satisfies a finite collection of decorated constraints,
then it is a unique graphon satisfying a finite collection of ordinary constraints by Lemmas~\ref{lm-partition} and~\ref{lm-decorated}, and
hence $W$ is finitely forcible.

We will visualize decorated constraints using the convention from~\cite{bib-reg}, which we now describe and 
have already used in Figure~\ref{fig:exConst}.
The root vertices of decorated graphs in a decorated constraint will be depicted by squares and the non-root vertices by circles;
each vertex will be labeled with its decoration, i.e., the part that it should be contained in.
The roots will be in all the decorated graphs in the constraint in the same mutual position,
so it is easy to see the correspondence of the roots of different decorated graphs in the same constraint.
A solid line between two vertices represents an edge, and a dashed line represents a non-edge.
The absence of a line between two root vertices indicates that the decorated constraint
should hold for both the root graph containing this edge and not containing it.
Finally, the absence of a line between a non-root vertex and another vertex represents the sum of decorated graphs
with this edge present and without this edge.
If there are $k$ such lines absent,
the figure represents the sum of $2^k$ possible decorated graphs with these edges present or absent.

We finish this subsection with two auxiliary lemmas.
The first is a lemma stated in~\cite{bib-reg},
which essentially states that if a graphon $W_0$ is finitely forcible in its own right,
then it may be forced on a part of a partitioned graphon without altering the structure of the rest of the graphon.
\begin{lemma}
\label{lm-subforcing}
Let $k\in \NN$, $m\in [k]$, let $a_1,\ldots,a_k$ be positive real numbers summing to one, and
let $d_1,\ldots,d_k$ be distinct reals between zero and one.
If $W_0$ is a finitely forcible graphon,
then there exists a finite set $\CC$ of decorated constraints such that
any partitioned graphon $W$ with parts of sizes $a_1,\ldots,a_k$ and degrees $d_1,\ldots,d_k$ satisfies $C$ if and only if
there exist measure preserving maps $\varphi_0:[0,1]\to [0,1]$ and $\varphi_m:[0,a_m]\to A_m$ such that
$W(\varphi_m(xa_m),\varphi_m(ya_m))=W_0(\varphi_0(x),\varphi_0(y))$ for almost every $(x,y)\in [0,1]^2$,
where $A_m$ is the $m$-th part of $W$.
\end{lemma}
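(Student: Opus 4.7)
The plan is to translate the finite list of graphs that forces $W_0$ into decorated constraints about the part $A_m$, and then invoke the Borgs--Chayes--Lov\'asz characterization of weak isomorphism to recover the maps $\varphi_0$ and $\varphi_m$. Since $W_0$ is finitely forcible, first fix graphs $H_1,\ldots,H_r$ and reals $p_i:=d(H_i,W_0)$ with the property that any graphon $W'$ satisfying $d(H_i,W')=p_i$ for every $i\in[r]$ is weakly isomorphic to $W_0$.

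For each $i\in[r]$, let $\widetilde H_i$ be the decorated graph obtained from $H_i$ by assigning the $m$-th part to every vertex and taking no roots. Unwrapping the evaluation rule from Subsection~\ref{sec-constraints}, the value of $\widetilde H_i$ in a partitioned graphon $W$ with parts of the prescribed sizes and degrees is
$$\frac{1}{a_m^{|H_i|}}\int_{A_m^{|H_i|}}\prod_{uv\in E(H_i)}W(x_u,x_v)\prod_{uv\notin E(H_i)}\bigl(1-W(x_u,x_v)\bigr)\dd x.$$
Fixing any measure preserving map $\beta:[0,1]\to A_m$, with $A_m$ carrying the restricted Lebesgue measure rescaled by $a_m^{-1}$, and applying a change of variables shows that this integral equals $\frac{|\mathrm{Aut}(H_i)|}{|H_i|!}\,d(H_i,\widehat W)$, where $\widehat W(u,v):=W(\beta(u),\beta(v))$ is the rescaled subgraphon viewed as a graphon on $[0,1]^2$. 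Let $\CC$ consist, for each $i\in[r]$, of the decorated constraint $\widetilde H_i=\frac{|\mathrm{Aut}(H_i)|}{|H_i|!}\,p_i$.

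If a partitioned graphon $W$ satisfies every constraint in $\CC$, then $d(H_i,\widehat W)=p_i$ for all $i\in[r]$, so $\widehat W$ is weakly isomorphic to $W_0$ by the choice of the forcing graphs. The Borgs--Chayes--Lov\'asz theorem then supplies measure preserving maps $\psi_0,\psi_m:[0,1]\to[0,1]$ with $\widehat W(\psi_m(x),\psi_m(y))=W_0(\psi_0(x),\psi_0(y))$ almost everywhere; setting $\varphi_0:=\psi_0$ and $\varphi_m(t):=\beta(\psi_m(t/a_m))$ for $t\in[0,a_m]$ yields the required maps, since a short measure-theoretic check shows that $\varphi_m$ is measure preserving and $W(\varphi_m(xa_m),\varphi_m(ya_m))=W_0(\varphi_0(x),\varphi_0(y))$ almost everywhere. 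Conversely, the existence of such $\varphi_0$ and $\varphi_m$ makes $\widehat W$ and $W_0$ weakly isomorphic, so all densities agree and the constraints in $\CC$ hold.

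The main obstacle is purely bookkeeping: carefully verifying the automorphism factor $|\mathrm{Aut}(H_i)|/|H_i|!$ that relates the decorated density of $\widetilde H_i$ to $d(H_i,\widehat W)$, and tracking the interplay between Lebesgue measures on $[0,1]$, on $[0,a_m]$, and on $A_m$ when transporting the weak isomorphism maps back onto $A_m$. Conceptually the statement is natural: finite forcibility is local in the sense that it can be imposed on any positive-measure part using only decorated constraints whose vertices are all decorated with that part.
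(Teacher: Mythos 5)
The paper does not prove Lemma~\ref{lm-subforcing}; it is imported verbatim from~\cite{bib-reg}, so there is no in-paper proof to compare against. Taking your proposal on its own merits: the argument is correct and is the natural one given the definitions. Converting each forcing graph $H_i$ of $W_0$ into the unrooted, fully $m$-decorated graph $\widetilde H_i$, and equating its decorated-constraint value to $\tfrac{|\mathrm{Aut}(H_i)|}{|H_i|!}\,d(H_i,W_0)$, is exactly right; the normalization matches the paper's $K_{1,2}$ example (value $p^2(1-p)$ versus $d(K_{1,2},W)=3p^2(1-p)$, factor $2/3!=1/3$). The passage from the constraints to the maps $\varphi_0,\varphi_m$ via the Borgs--Chayes--Lov\'asz characterization is sound, and your reduction $\varphi_m(t)=\beta(\psi_m(t/a_m))$ is measure preserving as you claim, since $t\mapsto t/a_m$, $\psi_m$, and $\beta$ compose to push Lebesgue on $[0,a_m]$ onto Lebesgue on $A_m$. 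One small point worth making explicit: you should justify the existence of a measure preserving map $\beta:[0,1]\to A_m$ with the rescaled measure on $A_m$ (e.g.\ via the isomorphism theorem for atomless standard probability spaces, since $A_m$ is a positive-measure Borel set), and note that in the converse direction the constraints follow because the density of any $H$ in $\widehat W$ depends only on the subgraphon $W|_{A_m}$ and not on the particular choice of $\beta$, which lets you compare $\widehat W$ with the graphon $W(\varphi_m(\cdot\,a_m),\varphi_m(\cdot\,a_m))$. With these two remarks added, the proof is complete.
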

\noindent Note that Lemma~\ref{lm-decorated} implies that
the set $\CC$ of decorated constraints from Lemma~\ref{lm-subforcing}
can be turned into a set of ordinary (i.e., non-decorated) constraints.

The second lemma is implicit in~\cite[proof of Lemma 3.3]{bib-lovasz11+};
its special case has been stated explicitly in, e.g.,~\cite[Lemma 8]{bib-reg}.
\begin{lemma}
\label{lm-typ-pairs}
Let $X,Z\subseteq\RR$ be two measurable non-null sets, and
let $F:X\times Z\to [0,1]$ be a measurable function.
If there exists $C\in\RR$ such that
$$\int_Z F(x,z)F(x',z) \dd z = C$$
for almost every $(x,x')\in X^2$, then
$$\int_Z F(x,z)^2 \dd z = C$$
for almost every $x\in X$.
\end{lemma}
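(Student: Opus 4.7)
The plan is to show that, up to null sets, the slices $x\mapsto F(x,\cdot)$ are essentially independent of $x$, at which point the conclusion becomes a one-line calculation.

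Set $g(z)=\int_X F(x,z)\dd x$. Integrating the hypothesis once over $x'\in X$ (resp.\ twice over $(x,x')\in X^2$) and applying Fubini, one obtains
$$\int_Z F(x,z)\,g(z)\dd z=C|X|\quad\text{for almost every }x\in X,\qquad \int_Z g(z)^2\dd z=C|X|^2.$$
Let $\tilde F(x,z)=F(x,z)-g(z)/|X|$. Expanding $\int_Z \tilde F(x,z)\tilde F(x',z)\dd z$ and substituting the hypothesis together with the two identities above, the four resulting terms evaluate to $+C,-C,-C,+C$ and cancel, so $\int_Z \tilde F(x,z)\tilde F(x',z)\dd z=0$ for almost every $(x,x')\in X^2$.

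The heart of the argument is to deduce from this that $\tilde F=0$ almost everywhere on $X\times Z$. For any measurable $A\subseteq X$ of finite measure, Fubini gives
$$0=\int_A\int_A\int_Z \tilde F(x,z)\tilde F(x',z)\dd z\dd x\dd x'=\int_Z\left(\int_A \tilde F(x,z)\dd x\right)^{2}\dd z,$$
so $\int_A \tilde F(x,z)\dd x=0$ for almost every $z$. Restricting $A$ to a countable family generating the Borel $\sigma$-algebra of $X$ (for instance, finite unions of dyadic intervals intersected with $X$), one gets a single null subset of $Z$ outside which $x\mapsto \tilde F(x,z)$ has zero integral over every measurable subset of $X$, hence vanishes a.e.\ in $x$. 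By Fubini $\tilde F=0$ almost everywhere, so $F(x,z)=g(z)/|X|$ a.e., and therefore
$$\int_Z F(x,z)^{2}\dd z=|X|^{-2}\int_Z g(z)^2\dd z=C$$
for almost every $x\in X$.

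The main obstacle is precisely this middle step: the hypothesis constrains only off-diagonal pairs $(x,x')$ with $x\neq x'$, while the conclusion is a statement about the diagonal, which is a null set in $X^2$. The remedy above squares the off-diagonal quantity via Fubini---turning an orthogonality-type condition into a non-negative integrand---and then extracts pointwise information by testing against a countable generating family of sets. Equivalently, the step can be phrased in operator language: the Hilbert--Schmidt operator $T\colon L^2(X)\to L^2(Z)$ with kernel $\tilde F$ satisfies $T^*T=0$, so $T=0$ and the kernel vanishes almost everywhere.
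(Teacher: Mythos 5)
Your proof is correct. The paper does not supply its own proof of this lemma---it cites Lov\'asz--Szegedy and Cooper--Kaiser--Kr\'al'--Noel---so there is no in-paper argument to compare against line by line, but your argument is a clean rendering of the standard one. Centring $F$ by the mean slice $g(z)/|X|$ reduces the claim to showing that a kernel whose off-diagonal Gram integrals vanish almost everywhere must itself vanish, and you resolve this by squaring averages over test sets via Fubini and then running a countable-generating-family (Dynkin $\pi$--$\lambda$) argument; the operator-theoretic rephrasing at the end ($T^*T=0$ for a Hilbert--Schmidt operator forces $T=0$, hence its kernel vanishes a.e.) packages the same step most compactly. Two minor caveats are worth recording. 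First, the derivation quietly assumes $|X|<\infty$ (you divide by $|X|$ and use $g(z)\le|X|$), while the statement allows arbitrary measurable non-null $X\subseteq\RR$; this is harmless, since one may apply the finite-measure case to $X\cap[-n,n]$ and take a countable union over $n$, and in the paper's applications $X$ and $Z$ are subsets of $[0,1]$ anyway. Second, in the generating-family step one should verify that the chosen family is a $\pi$-system and that $\mu_z(X)=\int_X\tilde F(x,z)\dd x=0$, so that $\{A:\mu_z(A)=0\}$ is a Dynkin system containing both $X$ and the family; both facts hold here---finite unions of dyadic intervals are closed under intersection, and $\int_X\tilde F(x,z)\dd x=g(z)-g(z)=0$ by the definition of $g$---but it is worth making them explicit.
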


\subsection{Regularity partitions and step functions}
\label{sec-regularity}

A {\em step function} $W:[0,1]^2\to [-1,1]$ is a measurable function such that
there exists a partition of $[0,1]$ into measurable non-null sets $U_1,\ldots,U_k$ that
$W$ is constant on $U_i\times U_j$ for every $i,j\in [k]$.
A non-negative symmetric step function is a {\em step graphon}.
If $W$ is a step function (in particular, $W$ can be a step graphon) and $A$ and $B$ two measurable subsets of $[0,1]$,
then the {\em density $d_W(A,B)$ between $A$ and $B$} is defined to be
$$d_W(A,B)=\int\limits_{A\times B} W(x,y)\dd x\dd y\;\mbox{.}$$
We will omit $W$ in the subscript if $W$ is clear from the context.
Note that it always holds that $|d(A,B)|\le |A|\cdot |B|$.
We remark that the definition of $d_W(A,B)$ and the definition of the cut norm given below
extend to a wider class of functions from $[0,1]^2$ to $\RR$, which are called kernels;
since we will use these definition for step functions, we present them in the setting of step functions only.
A step function $W'$ {\em refines} a step function $W$ with parts $U_1,\ldots,U_k$,
if each part of $W'$ is a subset of one of the parts of $W$ and
the density $d_W(U_i,U_j)$ between $U_i$ and $U_j$ is equal to
the weighted average of the densities between the pairs of those parts of $W'$ that are subsets of $U_i$ and $U_j$, respectively.

We next recall the notion of the cut norm.
If $W$ is a step function, then the {\em cut norm} of $W$, denoted by $\cut{W}$, is
$$\sup_{A,B\subseteq [0,1]}\left|\int_{A\times B}W(x,y)\dd x\dd y\right|\;\mbox{,}$$
where the supremum is taken over all measurable subsets $A$ and $B$ of $[0,1]$.
The supremum in the definition is always attained and
the cut norm induces the same topology on the space of step functions as the $L_1$-norm;
this can be verified following the lines of the analogous arguments for graphons in~\cite[Chapter 8]{bib-lovasz-book}.
We emphasize that
we do not allow applying a measure preserving transformation to the domain of graphons
unlike in the definition of the cut distance.
It can be shown~\cite[Lemma 10.23]{bib-lovasz-book} that
if $H$ is a $k$-vertex graph and $W$ and $W'$ are two graphons,
then
$$\left|d(H,W)-d(H,W')\right|\le {k\choose 2}\cut{W-W'}\;\mbox{.}$$
Finally,
we will say that two graphons $W$ and $W'$ are {\em $\varepsilon$-close} if $\cut{W-W'}\le\varepsilon$.

A partition of $[0,1]$ into measurable non-null sets $U_1,\ldots,U_k$ is said to be {\em $\varepsilon$-regular}
if
$$\left|d(A,B)-\sum_{i,j\in [k]}\frac{d(U_i,U_j)}{|U_i||U_j|}|U_i\cap A||U_j\cap B|\right|\le\varepsilon$$
for every two measurable subsets $A$ and $B$ of $[0,1]$.
In other words, the step graphon $W'$ with parts $U_1,\ldots,U_k$ that is equal to $\frac{d(U_i,U_j)}{|U_i||U_j|}$ on $U_i\times U_j$
is $\varepsilon$-close to $W$ in the cut norm metric.
In particular, the step graphon $W'$ determines the densities of $k$-vertex graphs in $W$
up to an additive error of ${k\choose 2}\varepsilon$.

The Weak Regularity Lemma of Frieze and Kannan~\cite{bib-frieze99+} extends to graphons
as follows (see~\cite[Section 9.2]{bib-lovasz-book} for further details):
for every $\varepsilon>0$,
there exists $K\le 2^{O\left(\varepsilon^{-2}\right)}$, which depends on $\varepsilon$ only, such that
every graphon has an $\varepsilon$-regular partition with at most $K$ parts.
This dependence of $K$ on $\varepsilon$ is best possible up to a constant factor in the exponent~\cite{bib-conlon12+}.
We will need a slightly stronger version of this statement, which we formulate as a proposition;
its proof is an easy modification of a proof of the standard version of the statement,
e.g., the one presented in~\cite[Section 9.2]{bib-lovasz-book}.
\begin{proposition}
\label{prop-regularity}
For every $\varepsilon>0$ and $k\in\NN$,
there exists $K\in\NN$ such that for every graphon $W$ and every partition $U_1,\ldots,U_k$ of $[0,1]$ into disjoint measurable non-null sets,
there exist an $\varepsilon$-regular partition $U'_1,\ldots,U'_{K'}$ of $[0,1]$ with $K'\le K$ such that
every part $U'_{i}$, $i\in [K']$, is a subset of one of the parts $U_1,\ldots,U_k$.
\end{proposition}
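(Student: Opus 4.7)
The plan is to adapt the standard energy-increment proof of the Weak Regularity Lemma of Frieze and Kannan, starting the iterative refinement from the given partition $U_1,\ldots,U_k$ rather than from the trivial one. For any partition $\mathcal{P}=\{V_1,\ldots,V_\ell\}$ of $[0,1]$ into measurable non-null sets, let $W_{\mathcal{P}}$ be the step function equal to $d(V_i,V_j)/(|V_i||V_j|)$ on $V_i\times V_j$, and define its \emph{energy}
\[
e(\mathcal{P})=\int_{[0,1]^2} W_{\mathcal{P}}(x,y)^2\dd x\dd y=\sum_{i,j\in[\ell]}\frac{d(V_i,V_j)^2}{|V_i||V_j|}.
\]
Since $W$ takes values in $[0,1]$, one has $0\le e(\mathcal{P})\le 1$, and refining $\mathcal{P}$ never decreases the energy.

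The key step I would prove is the usual \emph{energy-increment lemma}: if $\mathcal{P}$ is not $\varepsilon$-regular, then there exist measurable $A,B\subseteq[0,1]$ with $\left|\int_{A\times B}\bigl(W(x,y)-W_{\mathcal{P}}(x,y)\bigr)\dd x\dd y\right|>\varepsilon$, and the partition $\mathcal{P}'$ obtained from $\mathcal{P}$ by intersecting each $V_i$ with each of $A\cap B$, $A\setminus B$, $B\setminus A$, $[0,1]\setminus(A\cup B)$ (discarding any null pieces) satisfies $e(\mathcal{P}')\ge e(\mathcal{P})+\varepsilon^2$. The verification is the standard calculation: write the excess as a sum over the atoms of $\mathcal{P}'$, and apply Cauchy--Schwarz together with the fact that $W_{\mathcal{P}'}$ is the orthogonal projection of $W$ onto the subspace of $L_2([0,1]^2)$ of functions constant on the product parts of $\mathcal{P}'$.

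With this lemma in hand, I would iterate. Set $\mathcal{P}_0=\{U_1,\ldots,U_k\}$, and while $\mathcal{P}_t$ is not $\varepsilon$-regular let $\mathcal{P}_{t+1}$ be the refinement supplied by the lemma. Because the energy lies in $[0,1]$ and grows by at least $\varepsilon^2$ per step, the procedure halts after at most $\lceil\varepsilon^{-2}\rceil$ iterations. Each iteration multiplies the number of parts by at most $4$, so the final partition is $\varepsilon$-regular, refines $\{U_1,\ldots,U_k\}$ by construction, and has at most $K:=k\cdot 4^{\lceil\varepsilon^{-2}\rceil}$ parts; crucially, $K$ depends only on $k$ and $\varepsilon$, as required.

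The main obstacle is honestly slight: the argument is a routine adaptation of the classical proof, and the presence of the initial $k$-part partition contributes only the factor $k$ to the bound because the $[0,1]$-boundedness of the energy is insensitive to the starting configuration. The one point warranting a line of care is that intersecting the $V_i$ with the atoms generated by $A$ and $B$ may produce pieces of measure zero; these are discarded without affecting $W_{\mathcal{P}}$ or $e(\mathcal{P})$, so the bookkeeping is cosmetic and the bound $k\cdot 4^{\lceil\varepsilon^{-2}\rceil}$ survives unchanged.
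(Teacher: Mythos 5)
Your argument is correct and is precisely what the paper intends: the paper does not give a proof but states that it is ``an easy modification of a proof of the standard version'' (citing Lov\'asz's book, Section 9.2), and your proposal carries out exactly that modification by seeding the standard energy-increment iteration with the given partition $\{U_1,\ldots,U_k\}$ rather than the trivial one, yielding the bound $K=k\cdot 4^{\lceil\varepsilon^{-2}\rceil}$.
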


For a step function $W$, we define $d(\Gamma_4,W)$ to be the following integral:
$$d(\Gamma_4,W)=\int_{[0,1]^4}W(x,y)W(x',y)W(x,y')W(x',y')\dd x\dd x'\dd y\dd y'\;\mbox{.}$$
Note that the definition of $d(\Gamma_4,W)$ coincides with the standard definition of $t(C_4,W)$,
see e.g.~\cite{bib-lovasz-book}.
In particular, if $W$ is a graphon, then it holds that
$$d(\Gamma_4,W)=\frac{1}{3}d(C_4,W)+\frac{1}{3}d(K_4^-,W)+d(K_4,W)\;\mbox{,}$$
where $K_4^-$ is the graph obtained from $K_4$ by removing one of its edges.
In particular, $d(\Gamma_4,W)$ can be understood as the density of non-induced $C_4$ in the graphon $W$,
since it is equal to the expected density of non-induced copies of $C_4$ in a $W$-random graph.
If $W$ is a step function, then $d(\Gamma_4,W)\le 4 \cut{ W}$.
However, the converse also holds: $d(\Gamma_4,W)\ge\cut{W}^4$;
we refer e.g.~to~\cite[Section 8.2]{bib-lovasz-book},
where a proof for symmetric step functions $W$ is given and this proof readily extends to the general case.
Lemma~\ref{lm-C4}, which we present further, aims at a generalization of this statement to step graphons.
Before we can state this lemma, we need to prove two auxiliary lemmas,
which we state for matrices rather than step functions for simplicity.

\begin{lemma}
\label{lm-C4-aux}
Let $M$ be a $K\times K$ real matrix and let $i,j\in [K]$.
Define $N$ to be the following $K\times K$ matrix:
$$N_{x,y}=\left\{
\begin{array}{cl}
\frac{M_{i,y}+M_{j,y}}{2} & \mbox{if $x=i$ or $x=j$, and}\\
M_{x,y} & \mbox{otherwise.}
\end{array}
\right.$$
It holds that $\Tr MM^TMM^T\ge \Tr NN^TNN^T$.
\end{lemma}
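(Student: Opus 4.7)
The plan is to recognize that the row-averaging operation defining $N$ from $M$ is left-multiplication by an orthogonal projection, after which the inequality reduces to a routine block-decomposition identity for the trace of the square of a symmetric matrix.

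Explicitly, let $P$ be the $K\times K$ matrix with $P_{xk}=\delta_{xk}$ for $x\notin\{i,j\}$, with $P_{xk}=\tfrac12$ for $x\in\{i,j\}$ and $k\in\{i,j\}$, and with $P_{xk}=0$ for $x\in\{i,j\}$ and $k\notin\{i,j\}$. A direct check gives $P^T=P$ and $P^2=P$, so $P$ is an orthogonal projection, and $N=PM$ by construction. Setting $A:=MM^T$, which is symmetric, we then have $NN^T=PMM^TP=PAP$, so
\[
\Tr NN^TNN^T=\Tr (PAP)^2.
\]
The inequality of the lemma thus reduces to the general claim that $\Tr A^2\ge\Tr (PAP)^2$ for every symmetric matrix $A$ and every orthogonal projection $P$.

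To establish this general claim, I would choose an orthonormal basis of $\RR^K$ adapted to the decomposition $\mathrm{range}(P)\oplus\mathrm{range}(P)^\perp$, in which $A$ takes the block form
\[
A=\begin{pmatrix} B & C \\ C^T & D \end{pmatrix},\qquad PAP=\begin{pmatrix} B & 0 \\ 0 & 0 \end{pmatrix}.
\]
Expanding $A^2$ in blocks and taking the trace gives
\[
\Tr A^2=\Tr B^2+2\Tr CC^T+\Tr D^2=\Tr (PAP)^2+2\Tr CC^T+\Tr D^2,
\]
and both $\Tr CC^T$ and $\Tr D^2$ are nonnegative, being the trace of a positive semidefinite matrix and the trace of the square of a symmetric matrix respectively. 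This completes the argument.

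The only step that requires any insight is spotting the projection structure; once $P$ is identified and shown to be idempotent, the rest is mechanical linear algebra and does not even use the positive semidefiniteness of $A=MM^T$. An alternative route, avoiding the operator viewpoint, is to expand $\Tr MM^TMM^T=\sum_{a,b}(MM^T)_{a,b}^2$ and split the sum according to whether $a$ and $b$ lie in $\{i,j\}$: each of the three resulting contributions to $\Tr MM^TMM^T-\Tr NN^TNN^T$ can be rewritten as a (nonnegative) variance of a small set of inner products between rows of $M$. This entry-by-entry approach is more elementary but considerably more tedious, so the projection argument seems preferable.
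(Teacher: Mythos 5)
Your argument is correct, and it takes a genuinely different route from the paper. The paper works entry-by-entry: writing $A=MM^T$ and $A'=NN^T$, it expands $\Tr A^2-\Tr A'^2=\sum_{x,y}A_{x,y}^2-A'_{x,y}^2$ and partitions the index set $[K]^2$ into groups (both indices outside $\{i,j\}$; exactly one in $\{i,j\}$; both in $\{i,j\}$), showing by direct algebraic manipulation that the contribution of each group is a sum of squares, hence nonnegative. You instead identify the averaging operation as left multiplication by the orthogonal projection $P$, reduce to $\Tr A^2\ge\Tr(PAP)^2$ for symmetric $A$, and dispatch that by a block decomposition in a basis adapted to $\operatorname{range}(P)\oplus\operatorname{range}(P)^\perp$. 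Both proofs are complete; yours is shorter and makes the structural reason for the inequality transparent (compression by a projection can only decrease the Frobenius norm), whereas the paper's computation is more elementary and, because it is phrased purely in terms of the entries $A_{x,y}=\sum_z M_{x,z}M_{y,z}$, lines up directly with the later iteration step in Lemma~\ref{lm-C4-aux-iter}, where the same grouping-and-averaging idea is applied repeatedly to both rows and columns. One small remark on your write-up: your clean identity $\Tr A^2-\Tr(PAP)^2=2\Tr CC^T+\Tr D^2$ in fact gives a quantitative lower bound on the drop, which is slightly more than the lemma asks for; the paper's expansion likewise produces explicit nonnegative remainder terms, and neither proof exploits this extra information downstream.
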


\begin{proof}
Set $M(x,y)$, $x,y\in [K]$, to be the following quantity:
$$M(x,y)=\sum_{z=1}^K M_{x,z}M_{y,z}\;\mbox{,}$$
and define $N(x,y)$, $x,y\in [K]$, in the analogous way.
Observe that
$$\Tr MM^TMM^T-\Tr NN^TNN^T=\sum_{x,y=1}^KM(x,y)^2-N(x,y)^2\;\mbox{.}$$
We now analyze the difference on the right hand side of the equality
by grouping the terms on the right hand side into disjoint sets such that
the sum of the terms in each set is non-negative.

The terms with $x,y\in [K]\setminus\{i,j\}$ form singleton sets;
note that $M(x,y)=N(x,y)$ for each such term.
Fix $x\in [K]\setminus\{i,j\}$ and consider the two terms corresponding to $y=i$ and $y=j$.
It follows that
$$M(x,i)^2+M(x,j)^2-N(x,i)^2-N(x,j)^2 = $$
$$M(x,i)^2+M(x,j)^2-2 \left(\sum_{z=1}^K M_{xz}\frac{M_{i,z}+M_{j,z}}{2} \right)^2 = $$
$$M(x,i)^2+M(x,j)^2-\frac{1}{2}\left(M(x,i)+M(x,j)\right)^2 = $$
$$\frac{1}{2}M(x,i)^2+\frac{1}{2}M(x,j)^2-M(x,i)M(x,j)=\frac{1}{2}\left(M(x,i)-M(x,j)\right)^2 \;\mbox{.}$$
Hence, the sum of any pair of such terms is non-negative.
The analysis of the terms with $y\in [K]\setminus\{i,j\}$ and $x=i$ or $x=j$ is symmetric.

The remaining four terms that have not been analyzed are the terms corresponding to the following pairs $(x,y)$:
$(i,i)$, $(i,j)$, $(j,i)$ and $(j,j)$.
In this case, we obtain the following:
$$M(i,i)^2+2M(i,j)^2+M(j,j)^2-N(i,i)^2-2N(i,j)^2-N(j,j)^2=$$
$$M(i,i)^2+2M(i,j)^2+M(j,j)^2-4\left(\frac{M(i,i)+2M(i,j)+M(j,j)}{4}\right)^2=$$
$$\frac{1}{4}\left(M(i,i)-M(j,j)\right)^2+
  \frac{1}{2}\left(M(i,i)-M(i,j)\right)^2+
  \frac{1}{2}\left(M(j,j)-M(i,j)\right)^2\;\mbox{.}$$
Hence, the sum of these four terms is also non-negative, and the lemma follows.
\end{proof}

The next lemma follows by repeatedly applying Lemma~\ref{lm-C4-aux}
to pairs of rows of the matrix $M$ with indices from the same set $A_i$ and
to pairs of rows of the matrix $M^T$ with indices from the same set $B_i$, and
considering the limit matrix $N$.

\begin{lemma}
\label{lm-C4-aux-iter}
Let $M$ be a $K\times K$ real matrix.
Further, let $X_1,\ldots,X_k$ be a partition of $[K]$ into $k$ disjoint sets and
let $Y_1,\ldots,Y_{\ell}$ be a partition of $[K]$ into $\ell$ disjoint sets.
Define the $K\times K$ matrix $N$ as follows. If $x\in X_i$, $y\in Y_j$,
then
$$N_{x,y}=\frac{1}{|X_i|\cdot |Y_j|} \sum_{x'\in X_i,y' \in Y_j} M_{x',y'}\;\mbox{.}$$
It holds that $\Tr MM^TMM^T=\Tr M^TMM^TM\ge \Tr NN^TNN^T=\Tr N^TNN^TN$.
\end{lemma}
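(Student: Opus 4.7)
My plan is as follows. The two trace equalities $\Tr MM^TMM^T = \Tr M^TMM^TM$ (and analogously for $N$) are immediate from the cyclic invariance of the trace, so the substance lies in the inequality. I would introduce two families of elementary operations on $K\times K$ matrices: for $i,j$ in the same set $X_a$, let $R_{i,j}$ replace rows $i$ and $j$ of $M'$ by their common average, and for $i,j$ in the same set $Y_b$, let $C_{i,j}$ do the same for columns. Lemma~\ref{lm-C4-aux} applied directly to $M'$ shows that $R_{i,j}$ does not increase $\rho(M'):=\Tr M'(M')^TM'(M')^T$; applying the lemma to $(M')^T$ and combining with the cyclic identity $\rho(M')=\Tr(M')^TM'(M')^TM'$ yields the same conclusion for $C_{i,j}$.

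I would then fix an enumeration $\pi_1,\ldots,\pi_L$ of all these operations and iteratively set $M^{(0)}=M$ and $M^{(t+1)}=\pi_{1+(t\bmod L)}(M^{(t)})$, so that each operation is applied infinitely often. Because every operation is a convex combination of existing rows (resp.\ columns), the entries of every $M^{(t)}$ are bounded in absolute value by $\max_{x,y}|M_{x,y}|$, so the sequence lies in a compact set; by Bolzano--Weierstrass some subsequence $M^{(t_sL)}$ taken at the ends of complete cycles converges to a matrix $M^*$. Since $\rho(M^{(t)})$ is monotone non-increasing and bounded below it converges, and continuity forces $\rho$ to be constant along the $L$-step cycle starting at $M^*$; by monotonicity no single operation $\pi_l$ can strictly decrease $\rho(M^*)$.

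Next I would show $M^*=N$. Inspecting the proof of Lemma~\ref{lm-C4-aux}, the corner contribution $\tfrac{1}{2}(M(i,i)-M(i,j))^2=\tfrac{1}{2}\langle r_i,r_i-r_j\rangle^2$ together with its $(j,j)$ counterpart vanishes only when $r_i=r_j$, so $R_{i,j}$ leaves $\rho$ unchanged precisely when rows $i$ and $j$ of its input agree (symmetrically for $C_{i,j}$). Applied to $M^*$, this forces rows within every $X_a$ to be pairwise equal and columns within every $Y_b$ to be pairwise equal, so $M^*$ is block-constant on each $X_a\times Y_b$. Since each operation preserves the block sums $\sum_{x\in X_a,\,y\in Y_b}M'_{x,y}$, these sums equal those of $M$, and because $N$ is the unique block-constant matrix with the correct block sums we conclude $M^*=N$. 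Monotonicity of $\rho$ along the sequence together with continuity then gives $\rho(M)\ge\rho(M^*)=\rho(N)$, which is the desired inequality.

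The main obstacle I anticipate is the fixed-point characterization: the decrease formula in Lemma~\ref{lm-C4-aux} is phrased in terms of the inner products $M(x,y)=\langle r_x,r_y\rangle$ rather than the rows themselves, so one must verify that vanishing of the total decrease really forces $r_i=r_j$, not merely that $r_i$ and $r_j$ have equal inner products with every row. A short Cauchy--Schwarz argument applied to the three corner contributions at $(i,i)$, $(i,j)$, $(j,j)$ shows that $\langle r_i,r_i-r_j\rangle=\langle r_j,r_i-r_j\rangle=0$ together with $\|r_i\|=\|r_j\|$ already implies $r_i=r_j$, which closes the gap.
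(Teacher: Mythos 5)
Your proof is correct and follows essentially the same approach as the paper, which gives this lemma as an immediate consequence of iteratively applying Lemma~\ref{lm-C4-aux} to pairs of rows (and, via transposition, columns) within the same part and passing to the limit matrix $N$. You have simply filled in the compactness and fixed-point details that the paper elides, including the key observation that vanishing of the corner terms in the decrease formula forces $r_i=r_j$ (and note you do not even need the norm equality: $\langle r_i,r_i-r_j\rangle=0$ and $\langle r_j,r_i-r_j\rangle=0$ already give $\|r_i-r_j\|^2=0$).
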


The following auxiliary lemma can be viewed as an extension of \cite[Lemma 8.12]{bib-lovasz-book},
which states that $d(\Gamma_4,W)\ge\cut{W}^4$ for every graphon $W$,
from the zero graphon to general step graphons (consider the statement for $W_0$ being the zero graphon).
We remark that we have not tried to obtain the best possible dependence on the parameter $\varepsilon$ in the statement of the lemma.
The lemma also holds in a more general setting, where the parts of graphons are not required to be of the same size.

\begin{lemma}
\label{lm-C4}
Let $W_0$ be a step graphon with all parts of the same size, and
$W$ a step graphon refining $W_0$ such that all parts of $W$ have the same size.
If $\cut{W-W_0}\ge\varepsilon$, then $d(\Gamma_4,W)\ge d(\Gamma_4,W_0)+\varepsilon^4/8$.
\end{lemma}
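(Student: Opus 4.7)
The plan is to prove the stronger inequality
$$d(\Gamma_4,W)-d(\Gamma_4,W_0)\ge \tfrac{1}{3}\,d(\Gamma_4,W-W_0),$$
after which the bound $d(\Gamma_4,U)\ge \cut{U}^4$ cited just before the lemma, applied to $U := W-W_0$, yields $d(\Gamma_4,W)-d(\Gamma_4,W_0)\ge \varepsilon^4/3\ge \varepsilon^4/8$. The starting point is the Fubini identity
$$d(\Gamma_4,W)=\int_{[0,1]^2} K_W(y,y')^2\dd y\dd y',\qquad K_W(y,y'):=\int_0^1 W(x,y)W(x,y')\dd x,$$
together with the analogous identity for $W_0$. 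Writing $W=W_0+U$ produces the additive decomposition $K_W=K_{W_0}+L+L^T+K_U$ with $L(y,y'):=\int U(x,y)W_0(x,y')\dd x$ and $K_U(y,y'):=\int U(x,y)U(x,y')\dd x$.

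The central input is a pair of identities forced by the refinement hypothesis, which gives $\int_{A_a\times A_b}U=0$ for every two parts $A_a,A_b$ of $W_0$:
$$\int K_{W_0}(y,y')L(y,y')\dd y\dd y' = 0,\qquad \int K_{W_0}(y,y')K_U(y,y')\dd y\dd y' = \int L(y,y')^2\dd y\dd y'.$$
The second follows by a direct factoring that recognizes two copies of $L$ inside the resulting four-variable integral. For the first, one expands the integral, integrates out the variable appearing only in the two $W_0$-factors to collapse them into a function constant on $W_0$-blocks, and then uses the zero-integral property of $U$ on each such block.

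With these identities in hand, split $L=L_s+L_a$ into its symmetric and antisymmetric parts, so that $\int L\cdot L^T = \int L_s^2-\int L_a^2$ and $\int L_a\,K_U = 0$ (since $K_U$ is symmetric). Expanding
$$\int K_W^2 - \int K_{W_0}^2 = 2\int K_{W_0}(L+L^T+K_U)+\int(L+L^T+K_U)^2$$
and collecting terms yields
$$d(\Gamma_4,W)-d(\Gamma_4,W_0) = 6\int L_s^2+2\int L_a^2+4\int L_s\,K_U+\int K_U^2.$$
Completing the square on the $(L_s,K_U)$-pair rewrites the three relevant summands as $6\int(L_s+K_U/3)^2+\tfrac{1}{3}\int K_U^2\ge \tfrac{1}{3}\int K_U^2=\tfrac{1}{3}\,d(\Gamma_4,U)$, finishing the proof.

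The only substantive obstacle is the first of the two orthogonality identities: one must arrange a four-variable integral so that the block-constant structure of $W_0$ first collapses two factors into a single function constant on $W_0$-blocks, after which the zero-average property of $U$ on $W_0$-blocks kills the integral. The rest is bookkeeping; in particular, the equal-size hypothesis on the parts of $W$ and $W_0$ plays no role in this argument, consistent with the remark preceding the lemma.
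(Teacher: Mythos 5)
Your proof is correct, and it takes a genuinely different route from the paper. The paper works at the matrix level: it encodes $W$ and $W_0$ as matrices $M$ and $P$, introduces an intermediate matrix $N$ by column-averaging within blocks of $W_0$, and applies the iterated-averaging inequality of Lemma~\ref{lm-C4-aux-iter} twice to decompose $\Tr M^4-\Tr P^4$ through $N$, obtaining $\varepsilon_1^4+\varepsilon_2^4\ge\varepsilon^4/8$ by convexity. You instead work directly at the kernel level with $U=W-W_0$ and the Fubini representation $d(\Gamma_4,\cdot)=\int K_\cdot^2$, exploiting the exact orthogonality relations
$\int K_{W_0}L=0$ and $\int K_{W_0}K_U=\int L^2$
that the refinement hypothesis ($\int_{P_a\times P_b}U=0$ for every pair of $W_0$-blocks) forces; I checked both, and both hold by exactly the block-collapse argument you sketch. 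This buys you three things over the paper's argument: (i) a single, symmetric decomposition of $U$ rather than a two-step detour through a non-symmetric intermediate object $N$, (ii) a cleaner constant $\varepsilon^4/3$ in place of $\varepsilon^4/8$, and (iii) an argument that visibly never touches the equal-part-size hypothesis, which the paper itself flags as removable. The one point worth keeping explicit in a write-up is that you invoke $d(\Gamma_4,U)\ge\cut{U}^4$ for the step \emph{function} $U$, not a graphon; the paper already records that this extension holds, so the citation is fine, but it is the load-bearing external input and deserves to be stated with that caveat.
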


\begin{proof}
Since $\cut{W-W_0}\ge\varepsilon$, there exist two measurable subsets $A$ and $B$ of $[0,1]$ such that
\begin{equation}
\left|\int_{A\times B}W(x,y)-W_0(x,y)\dd x\dd y\right|\ge\varepsilon\;\mbox{.}\label{eq-C4-norm}
\end{equation}
Let $U$ be one of the parts of the graphon $W$.
Depending whether $\int_{U\times B}W-W_0\dd x\dd y$ is positive or negative,
replacing $A$ with either $A\cup U$ or $A\setminus U$ does not decrease the integral in (\ref{eq-C4-norm}).
Hence, we can assume that each part of $W$ is either a subset of $A$ or is disjoint from $A$, and
the same holds with respect to $B$ (but different parts $U$ of $W$ may be contained in $A$ and $B$).

Let $k$ be the number of parts of $W_0$ and $K$ the number of parts $W$.
Further, let $M$ be the $K\times K$ matrix such that
the entry $M_{i,j}$, $i,j\in K$, is the density of $W$ between its $i$-th and the $j$-th parts, and
let $P$ be the $K\times K$ matrix such that
$P_{i,j}$, $i,j\in K$, is the density of $W_0$ between the $i$-th and the $j$-th parts of $W$.
Let $U_i$, $i\in [k]$, be the subset of $[K]$ containing the indices of the parts of $W$ contained in the $i$-th part of $W_0$.
Observe that both matrices $M$ and $P$ are symmetric and
the matrix $P$ is constant on each submatrix indexed by pairs from $U_i\times U_j$ for some $i,j\in [k]$.
Since $d(\Gamma_4,W)=\Tr M^4$ and $d(\Gamma_4,W_0)=\Tr P^4$,
our goal is to show that $\Tr M^4-\Tr P^4\ge \varepsilon^4/8$.
Finally,
let $A'$ be the indices of parts of $W$ contained in $A$, and
let $B'$ be the indices of parts of $W$ contained in $B$.
Observe that (\ref{eq-C4-norm}) yields that the sum of the entries of the matrix $M-P$ 
with the indices in $A'\times B'$ is either at least $\varepsilon$ or at most $-\varepsilon$.

Let $N$ be the matrix from the statement of Lemma~\ref{lm-C4-aux-iter} for the matrix $M$,
$X_i=\{i\}$, $i\in [K]$, and $Y_{j}=U_j$, $j\in [k]$.
Let $\varepsilon_1$ be the sum of the entries of the matrix $M-N$
with the indices in $A'\times B'$, and
let $\varepsilon_2$ be the sum of the entries of the matrix $N-P$
with the indices in $A'\times B'$.
Note that $|\varepsilon_1+\varepsilon_2|\ge\varepsilon$,
which implies that $|\varepsilon_1|+|\varepsilon_2|$ is at least $\varepsilon$.
By Lemma~\ref{lm-C4-aux-iter}, it holds that $\Tr M^4-\Tr NN^TNN^T\ge 0$.
Since $P^T$ can be obtained from the matrix $N^T$ by applying Lemma~\ref{lm-C4-aux-iter}
with $X_i=\{i\}$, $i\in [K]$, and $Y_j=U_j$, $j\in [k]$, it follows that
$\Tr N^TNN^TN-\Tr P^4=\Tr NN^TNN^T-\Tr P^4\ge 0$.

We now show that $\Tr M^4-\Tr NN^TNN^T\ge\varepsilon_1^4$.
Let $Q=M-N$.
We now want to analyze the entries of the matrix $(N+\alpha Q)(N+\alpha Q)^T$ for $\alpha\in [0,1]$.
Fix $x,y\in [K]$ and observe that the entry in the $x$-th row and the $y$-th column of
the matrix $(N+\alpha Q)(N+\alpha Q)^T$ is equal to
$$\sum_{j=1}^{k}\sum_{z\in U_j}(N+\alpha Q)_{x,z}(N+\alpha Q)_{y,z}\;\mbox{.}$$
The definition of the matrix $N$ implies that
$$\sum_{z\in U_j}Q_{x,z}=\sum_{z\in U_j}Q_{y,z}=0$$
for every $j\in [k]$.
It also holds that $N_{x,z}=N_{x,z'}$ and $N_{y,z}=N_{y,z'}$ for any $z$ and $z'$ from the same set $U_j$, $j\in [k]$,
which implies that the entry of the matrix $(N+\alpha Q)(N+\alpha Q)^T$ in the $x$-th row and the $y$-th column is
$$\sum_{z=1}^{K}N_{x,z}N_{y,z}+\alpha^2Q_{x,z}Q_{y,z}\;\mbox{.}$$
Hence, we conclude that $(N+\alpha Q)(N+\alpha Q)^T=NN^T+\alpha^2QQ^T$.
It follows that 
\begin{eqnarray}
& \Tr (N+\alpha Q)(N+\alpha Q)^T(N+\alpha Q)(N+\alpha Q)^T = & \nonumber\\
& \Tr NN^TNN^T+2\alpha^2\Tr NN^TQQ^T+\alpha^4\Tr QQ^TQQ^T\;\mbox{.} & \label{eq-poly}
\end{eqnarray}
By Lemma~\ref{lm-C4-aux-iter} applied with $M=N+\alpha Q$ and the same sets $X_i$ and $Y_j$ as earlier,
$$\Tr (N+\alpha Q)(N+\alpha Q)^T(N+\alpha Q)(N+\alpha Q)^T-\Tr NN^TNN^T\ge 0$$
for every $\alpha\ge 0$, which implies that $\Tr NN^TQQ^T\ge 0$.
In particular, we obtain from (\ref{eq-poly}) for $\alpha=1$ that
\begin{eqnarray}
& \Tr M^4-\Tr NN^TNN^T= & \nonumber \\
& \Tr (N+\alpha Q)(N+\alpha Q)^T(N+\alpha Q)(N+\alpha Q)^T-\Tr NN^TNN^T\ge & \nonumber\\
& \Tr QQ^TQQ^T\;\mbox{.}\label{eq-almost1}
\end{eqnarray}
Since the cut-norm of the step graphon corresponding to $Q$ is at least $\varepsilon_1$,
it follows that $\Tr QQ^TQQ^T\ge\varepsilon_1^4$.

Applying the symmetric argument to the matrices $P^T$ and $N^T=N$, we obtain that
\begin{equation}
\Tr NN^TNN^T-\Tr P^4\ge \Tr (N-P)(N-P)^T(N-P)(N-P)^T\ge \varepsilon_2^4\;\mbox{.}\label{eq-almost2}
\end{equation}
Since $\Tr M^4-\Tr NN^TNN^T\ge 0$ and $\Tr N^TNN^TN-\Tr P^4\ge 0$,
we obtain from (\ref{eq-almost1}) and (\ref{eq-almost2}) using $|\varepsilon_1|+|\varepsilon_2|\ge\varepsilon$ that
$\Tr M^4-\Tr P^4\ge\varepsilon_1^4+\varepsilon_2^4\ge\varepsilon^4/8$, as desired.
\end{proof}

\section{General setting of the proof of Theorem~\ref{thm-main}}
\label{sec-initial}

In this section,
we provide a general overview of the structure of the graphon $W_0$ from Theorem~\ref{thm-main} and
the proof of Theorem~\ref{thm-main}.
The visualization of the structure of the graphon $W_0$ can be found in Figure~\ref{fig:graphon_univ}.
The proof of Theorem~\ref{thm-main} is spread through Sections~\ref{sec-initial}--\ref{sec-white},
with this section containing its initial steps.

\begin{figure}[htbp]
\begin{center}
\epsfbox{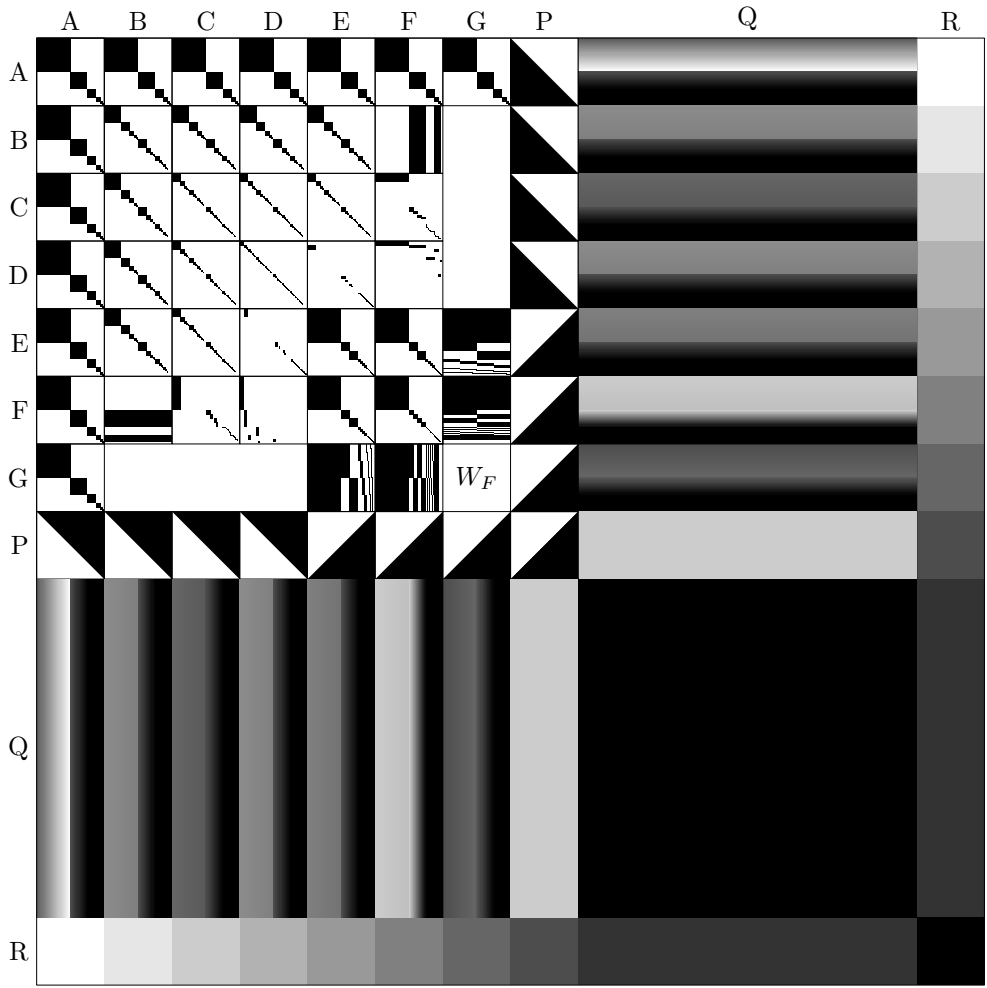}\vskip 10mm
\end{center}
\caption{The sketch of the graphon $W_0$ from Theorem~\ref{thm-main}. \label{fig:graphon_univ}}
\end{figure}

Fix a graphon $W_F$.
The graphon $W_0$ is a partitioned graphon with $10$ parts denoted by capital letters from $A$ to $R$.
Each part except for $Q$ has size $1/14$, and the size of $Q$ is $5/14$.
If $X,Y\in\{A,\ldots,G,P,Q,R\}$ are two parts,
the restriction of the graphon $W_0$ to $X\times Y$ will be referred to as the \emph{tile $X\times Y$}.
The graphon $W_F$ will be contained in the tile $G\times G$ of the graphon $W_0$.
The degrees of the parts (i.e., the degrees of the vertices forming the parts) are given in Table~\ref{tab-degrees};
the degree of $Q$ is at least $5/14+8/252$, i.e., larger than the degree of any other part, and will be fixed later in the proof.

\begin{table}[htbp]
\begin{center}
\begin{tabular}{|l|c|c|c|c|c|c|c|c|c|c|}
\hline
Part & $A$ &$B$ &$C$ & $D$ & $E$ & $F$ & $G$ & $P$ &$Q$ & $R$ \\\hline & & & & & & & & 
&\\[-1em]
Degree & $\frac{90}{252}$ & $\frac{91}{252}$ & $\frac{92}{252}$ & $\frac{93}{252}$ & $\frac{94}{252}$ & 
$\frac{95}{252}$ & $\frac{96}{252}$  & $ \frac{97}{252}$ & $\ge$ $\frac{98}{252}$& 
$\frac{77}{252}$\\[.15em]\hline
\end{tabular}
\end{center}
\caption{The degrees of the vertices in the parts of the graphon $W_0$ from the proof of Theorem~\ref{thm-main}.}
\label{tab-degrees}
\end{table}

Rather than giving a complex definition of the graphon $W_0$ at once,
we decided to present the particular details of the structure of $W_0$
together with the decorated constraints fixing the structure of $W_0$ in Sections~\ref{sec-initial}--\ref{sec-white}.
Table~\ref{tab-forcing} gives references to subsections
where the individual tiles of the graphon $W_0$ are considered and
the corresponding decorated constraints are given.

\begin{table}
\begin{center}
\begin{tabular}{|c|cccccccccc|}
\hline
& A & B & C & D & E & F & G & P & Q & R \\
\hline
A & \ref{sec-checkers} & \ref{sec-checkers} & \ref{sec-checkers} & \ref{sec-checkers} &
\ref{sec-checkers} & \ref{sec-checkers} & 
\ref{sec-checkers} & \ref{sec-coordinate} & \ref{sec-degrees} & \ref{sec-degdist} \\
B & \ref{sec-checkers} & \ref{sec-rcheckers} & \ref{sec-rcheckers} &
\ref{sec-rcheckers} & \ref{sec-rcheckers} & \ref{sec-BxN-tile} & \ref{sec-white} & 
\ref{sec-coordinate} & \ref{sec-degrees} & \ref{sec-degdist} \\
C &\ref{sec-rcheckers} & \ref{sec-rcheckers} & \ref{sec-rcheckers} &
\ref{sec-rcheckers} & \ref{sec-rcheckers} & \ref{sec-forc-rec} & \ref{sec-white} & 
\ref{sec-coordinate} & \ref{sec-degrees} & \ref{sec-degdist} \\
D & \ref{sec-checkers} & \ref{sec-rcheckers} & \ref{sec-rcheckers} &
\ref{sec-rcheckers} & \ref{sec-DxM-tile} & \ref{sec-DxN-tile} & \ref{sec-white} & 
\ref{sec-coordinate} & \ref{sec-degrees} & \ref{sec-degdist} \\
E & \ref{sec-checkers} & \ref{sec-rcheckers} & \ref{sec-rcheckers} & \ref{sec-DxM-tile} & 
 \ref{sec-dyadic-sq} & \ref{sec-dyadic-sq} & 
\ref{sec-ref-dyadic-sq} & \ref{sec-coordinate} & \ref{sec-degrees} & \ref{sec-degdist} \\
F & \ref{sec-checkers} & \ref{sec-BxN-tile} & \ref{sec-forc-rec} & \ref{sec-DxN-tile} & 
 \ref{sec-dyadic-sq} & 
\ref{sec-dyadic-sq} & \ref{sec-ref-dyadic-sq} & \ref{sec-coordinate} & \ref{sec-degrees} & 
\ref{sec-degdist} \\
G & \ref{sec-checkers} & \ref{sec-white} & \ref{sec-white} & \ref{sec-white} &
 \ref{sec-ref-dyadic-sq} & \ref{sec-ref-dyadic-sq} & 
\ref{sec-rec-tile} & \ref{sec-coordinate} & \ref{sec-degrees} & \ref{sec-degdist} \\
P & \ref{sec-coordinate} & \ref{sec-coordinate} & \ref{sec-coordinate} & \ref{sec-coordinate} &
\ref{sec-coordinate} & \ref{sec-coordinate} & \ref{sec-coordinate} & \ref{sec-coordinate} & 
\ref{sec-degrees} & \ref{sec-degdist} \\
Q & \ref{sec-degrees} & \ref{sec-degrees} & \ref{sec-degrees} & \ref{sec-degrees} & 
\ref{sec-degrees}& \ref{sec-degrees} & \ref{sec-degrees} & \ref{sec-degrees}& 
\ref{sec-white} & \ref{sec-degdist} \\
R & \ref{sec-degdist} & \ref{sec-degdist} & \ref{sec-degdist} & \ref{sec-degdist} & 
\ref{sec-degdist}& \ref{sec-degdist} & \ref{sec-degdist} & \ref{sec-degdist}& 
\ref{sec-degdist} & \ref{sec-degdist} \\
\hline
\end{tabular}
\end{center}
\caption{Subsections where the structure of the tiles are presented and the related decorated constraints then given.}
\label{tab-forcing}
\end{table}

We now start the proof of the finite forcibility of the graphon $W_0$.
Let $W$ be a graphon that satisfies the constraints from Lemma~\ref{lm-partition}
with respect to the sizes and degrees of the parts of $W_0$ and
that satisfies all the decorated constraints given in Sections~\ref{sec-initial}--\ref{sec-white}.
It will be obvious that the graphon $W_0$ also satisfies these constraints.
So, if we show that $W$ is weakly isomorphic to $W_0$,
then we will have established that $W_0$ is finitely forcible.
We will achieve this goal by constructing a measure preserving map $g:[0,1]\to [0,1]$ such that
$W(x,y)=W_0(g(x),g(y))$ for almost every $(x,y)\in [0,1]^2$.

Let $A,\ldots,G,P,Q,R$ be the parts of the graphon $W$.
To make a clear distinction between the parts of $W$ and $W_0$,
we will use $A_0,\ldots,G_0,P_0,Q_0,R_0\subseteq [0,1]$ to denote the subintervals forming the parts of $W_0$.
The Monotone Reordering Theorem~\cite[Proposition A.19]{bib-lovasz-book} implies that,
for every $X\in\{A,\ldots,G,P,Q,R\}$,
there exist a measure preserving map $\varphi_X:X\to [0,|X|)$ and
a non-decreasing function ${\tilde f}_X:[0,|X|)\to\RR$ such that
$${\tilde f}_X(\varphi_X(x))=\deg_W^P(x)=\frac{1}{|P|}\int\limits_P W(x,y)\dd y$$
for almost every $x\in X$.
The function $g$ maps the vertex $x\in X$, $X\in\{A,\ldots,G,P,Q,R\}$, of $W$ to the vertex $\eta_X(\varphi_X(x)/|X|)$
where $\eta_X$ is the bijective linear map from $[0,1)$ to the part $X_0$ of the graphon $W_0$ of
the form $\eta_X(x)=|X_0|\cdot x+c_X$ for some $c_X\in [0,1]$ (we intentionally define $\eta_X$ in this way,
instead of defining $\eta_X$ as a linear measure preserving map from $[0,|X_0|)$ to $X_0$,
since this definition simplifies our exposition later).
In addition, we define a function $f_X:X\to [0,1]$ as $f_X(x)={\tilde f}_X(\varphi_X(x))$ for every $x\in X$.
Clearly, $g$ is a measure preserving map from $[0,1]$ to $[0,1]$;
hence, we ``only'' need to show that $W(x,y)=W_0(g(x),g(y))$ for almost every $(x,y)\in [0,1]^2$.

\subsection{Coordinate system}
\label{sec-coordinate}

In this subsection, we analyze the tile $P\times P$ and the tiles $P\times X$ (and the symmetric tiles $X\times P$) where $X\in\{A,\ldots,G\}$.
The \emph{half-graphon} $W_{\triangle}$ is the graphon such that
$W_{\triangle}(x,y)$ is equal to $1$ if $x+y\ge 1$ and equal to $0$ otherwise;
the half-graphon is finitely forcible as shown in~\cite{bib-diaconis09+,bib-lovasz11+}.
Consider the decorated constraints from Lemma~\ref{lm-subforcing}
forcing the tile $P\times P$ to be weakly isomorphic to the half-graphon.
This implies that ${\tilde f}_P(x)=\varphi_P(x)/|P|$ for every $x\in [0,|P|)$,
where $\varphi_P$ and ${\tilde f}_P$ are the functions from the Monotone Reordering Theorem used to define the function $g$.
Lemma~\ref{lm-subforcing} and the finite forcibility of the half-graphon yield that
$W(x,y)=W_0(g(x),g(y))$ for almost every $(x,y)\in P^2$.

\begin{figure}[htbp]
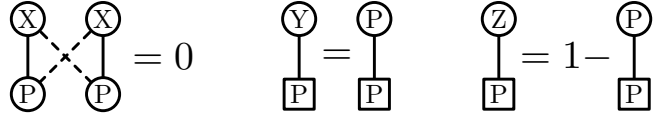

\begin{center}
\epsfbox{ffuniversal-32.mps} \hskip 10mm
\epsfbox{ffuniversal-30.mps} \hskip 10mm
\epsfbox{ffuniversal-31.mps}
\end{center}
\caption{Decorated constraints forcing the tiles $X\times P$ where $X\in\{A,\ldots, G\}$, $Y\in\{E,F,G\}$ and $Z\in\{A,B,C,D\}$.\label{fig-triang-cp}}
\end{figure}

Next consider the decorated constraints depicted in Figure~\ref{fig-triang-cp} and
fix $X\in\{A,\ldots,G\}$.
The first constraint in Figure~\ref{fig-triang-cp} implies
that $W(x,y)\in\{0,1\}$ for almost every $(x,y)\in P\times X$ and
that $N_W^X(x)\sqsubseteq N_W^X(x')$ or $N_W^X(x')\sqsubseteq N_W^X(x)$ for almost every pair $(x,x')\in P^2$.
In addition, the choice of the function $\varphi_X$ implies that
almost every pair $y,y'\in X$ satisfies the following:
if $\varphi_X(y)\le\varphi_X(y')$, then $\deg_W^P(y)\le\deg_W^P(y')$.
Hence, the first constraint and the choice of $\varphi_X$ yield that
for almost every $x\in P$, there exists $t\in [0,|X|]$ such that
the set $N_W^X(x)$ and $\varphi^{-1}_X([0,t))$ differ on a null set.
We conclude that there exists a function $h_X:P\to [0,1]$ such that
it holds for almost every $(x,y)\in P\times X$ that $W(x,y)=1$ if and only if $\varphi_X(y)/|X|\ge 1-h_X(x)$.

If $X\in\{E,F,G\}$, then the second constraint in Figure~\ref{fig-triang-cp} implies that
$\deg^X(x)=|N^X(x)|=\deg^P(x)$ for almost every $x\in P$, i.e., $h_X(x)=f_P(x)$.
Since it holds that $W(x,y)=1$ if and only if $\varphi_X(y)/|X|\ge 1-h_X(x)$ for almost every $(x,y)\in P\times X$,
we obtain that ${\tilde f}_X(y)=\varphi_X(y)/|X|$ for $y\in [0,|X|)$,
$W(x,y)=1$ for almost every $(x,y)\in P\times X$ with $f_P(x)+f_X(y)\ge 1$ and
$W(x,y)=0$ for almost every $(x,y)\in P\times X$ with $f_P(x)+f_X(y)<1$.
It follows that $W(x,y)=W_0(g(x),g(y))$ for almost every $(x,y)\in P\times X$, where $X\in\{E,F,G\}$.
The analogous argument using the third constraint in Figure~\ref{fig-triang-cp} implies that
$\deg^X(x)=|N^X(x)|=|X|-\deg^P(x)$ for almost every $x\in P$,
which yields that $W(x,y)=W_0(g(x),g(y))$ for almost every $(x,y)\in P\times X$, where $X\in\{A,B,C,D\}$.

We conclude this subsection by observing that
$\deg_W^P(x)=f_X(x)$ for almost every $x\in X$, where $X\in\{A,\ldots,G\}\cup\{P\}$.
In particular, we may interpret the relative degree of a vertex with respect to $P$ as its coordinate.
Also observe that $N_W^P(x)\sqsubseteq N_W^P(x')$
for almost every pair $(x,x')\in X\times X$ such that $f_X(x)\le f_X(x')$.

\subsection{Checker tiles}
\label{sec-checkers}

We now consider the tiles $A\times X$ where $X \in \{A, \ldots G\}$.
The argument follows the lines of the analogous argument presented in~\cite{bib-comp, bib-inf, bib-reg},
however, we include the details for completeness.
The \emph{checker graphon} $W_C$ is obtained as follows:
let $I_k = [1 - 2^{-k}, 1 - 2^{-(k+1)})$ for $k\in\NN_0$ and
set $W_C(x, y)$ equal to $1$ if $(x, y) \in \bigcup \limits_{k = 0}^{\infty} I_{k}^2$,
i.e., both $x$ and $y$ belong to the same $I_k$, and equal to $0$ otherwise.
The checker graphon $W_C$ is depicted in Figure~\ref{fig-checkers}.
We remark that we present an iterated version of this definition in Subsection~\ref{sec-rcheckers}.
We set $W_0(\eta_A(x),\eta_X(y))=W_C(x,y)$ for $x,y\in [0,1)^2$ where $X \in \{A, \ldots G\}$.

\begin{figure}[htbp]
\begin{center}
\epsfbox{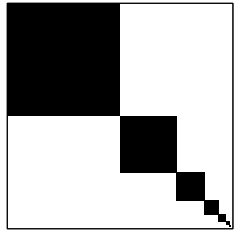}
\end{center}
\caption{The checker graphon $W_C$.\label{fig-checkers}}
\end{figure}

Consider the decorated constraints in Figure~\ref{fig-checkers-const},
which we claim to force the structure of the tile $A\times A$.
The argument follows the lines of the analogous argument given in~\cite[Section 5]{bib-comp},
so we sketch the main steps here and refer the reader for full details to~\cite{bib-comp}.
The first constraint in Figure~\ref{fig-checkers-const} implies that
there exists a collection $\JJ_A$ of disjoint measurable non-null subsets of $A$ such that
the following holds for almost every $(x,y)\in A\times A$:
$W(x,y)=1$ if and only if $x$ and $y$ belong to the same set $J\in\JJ_A$, and $W(x,y)=0$ otherwise.

\begin{figure}[htbp]
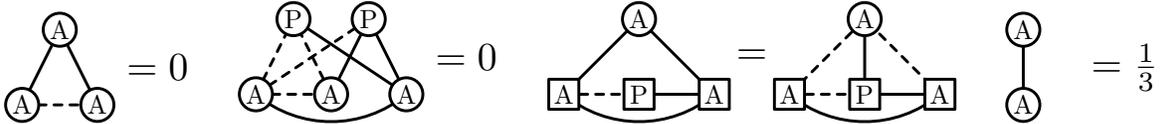

\begin{center}
\epsfbox{ffuniversal-7.mps} \hskip 5mm
\epsfbox{ffuniversal-8.mps} \hskip 5mm
\epsfbox{ffuniversal-9.mps} \hskip 5mm
\epsfbox{ffuniversal-10.mps}
\end{center}
\caption{The decorated constraints forcing the structure of the tile $A\times A$.\label{fig-checkers-const}}
\end{figure}

The second constraint in Figure~\ref{fig-checkers-const} implies that almost every triple $(x,x',x'')\in A^3$ satisfies that
if $x$ and $x''$ belong to the same set $J\in\JJ_A$ and $f_A(x)<f_A(x')<f_A(x'')$,
then $x'$ also belongs to the set $J$ (since $x$ and $x'$ cannot be non-adjacent).
This implies that for every $J\in\JJ_A$,
there exists an open interval $J'$ such that $J$ and $f_A^{-1}(J')$ differ on a null set.
Let $\JJ'_A$ be the collection of these open intervals for different sets $J\in\JJ_A$;
since $f_A$ is a measure preserving map and the sets in $\JJ_A$ are disjoint,
the intervals in $\JJ'_A$ must be disjoint.

The third constraint in Figure~\ref{fig-checkers-const} implies that almost every pair $(x,x')\in A^2$ satisfies that
if $x$ and $x'$ belong to the same set $J\in\JJ_A$ and $f_A(x)<f_A(x')$,
then $|N_W^A(x)\cap N_W^A(x')|=|J|$ is the measure of the set $Y$ of the points $y\in A$
such that $y\not\in J$ and $f_A(y)>f_P(x'')$ for almost every $x''\in P$ with $f_A(x)<f_P(x'')<f_A(x')$.
Observe that if $J$ is fixed and $J=f_A^{-1}(J')$ for $J'\in\JJ'_A$,
then the set $Y$ differs from $f_A^{-1}([\sup J',1))$ on a null set.
It follows that the measure $|J|=|J'|$ is equal to $1-\sup J'$.
Hence, each interval in $\JJ'_A$ is of the form $(1-2\gamma,1-\gamma)$ for some $\gamma\in (0,1/2]$;
let $\Gamma$ be the set of all the values of $\gamma$ for that there is a corresponding interval in $\JJ'_A$.
Note that if $\gamma\in \Gamma$, then $\Gamma\cap(\gamma/2,\gamma)=\emptyset$,
which implies in particular that the set $\Gamma$ is countable.
Let $\gamma_k$ be the $k$-th largest value in the set $\Gamma$ and
in case that $\Gamma$ is finite, set $\gamma_k=0$ for $k>|\Gamma|$.
It follows that
$$\frac{1}{|A|^2}\int\limits_{A\times A}W(x,y)\dd x\dd y=\sum_{J'\in\JJ'_A}\left(\sup J'-\inf J'\right)^2=\sum_{k\in\NN}\gamma_k^2\;\mbox{.}$$
The last constraint in Figure~\ref{fig-checkers-const} implies that the integral on the left hand side of the above equality
is equal to $1/3$, which is possible only if $\gamma_k=2^{-k}$ for every $k\in\NN$.
It follows that $W(x,y)=W_0(g(x),g(y))$ for almost every $(x, y) \in A^2$.

\begin{figure}[htbp]
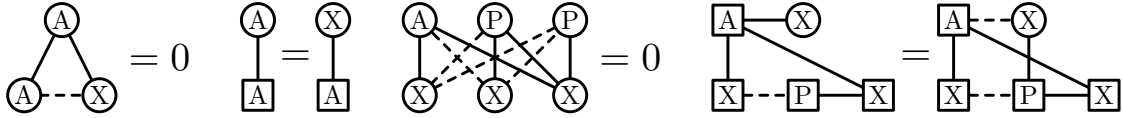

\begin{center}
\epsfbox{ffuniversal-11.mps} \hskip 5mm
\epsfbox{ffuniversal-12.mps} \hskip 5mm
\epsfbox{ffuniversal-13.mps} \hskip 5mm
\epsfbox{ffuniversal-14.mps}
\end{center}
\caption{The decorated constraints forcing the structure of the tiles $A\times X$ where $X\in\{B,\ldots,G\}$.\label{fig-cp-checkers}}
\end{figure}

We now consider the decorated constraints from Figure~\ref{fig-cp-checkers}.
Fix $X \in \{B, \ldots,G\}$.
The first constraint in Figure~\ref{fig-cp-checkers} implies that for every $J\in\JJ_A$,
there exists a measurable set $Z(J)\subseteq X$ such that
the following holds for almost every pair $(x,y)\in A\times X$:
$W(x,y)=1$ if $x\in J$ and $y\in Z(J)$, and $W(x,y)=0$ otherwise.
Note that the sets $Z(J)$ need not be disjoint.
The second constraint in Figure~\ref{fig-cp-checkers} yields that
$\deg_W^A(x)=\deg_W^X(x)$ for almost every $x \in A$,
which implies that the sets $J$ and $Z(J)$ have the same measure.
The third constraint implies that the following holds for almost every triple $(y,y',y'')\in X^3$:
if $f_P(y)<f_P(y')<f_P(y'')$, $y\in Z(J)$ and $y''\in Z(J)$, then $y'\in Z(J)$.
Consequently, for every $Z(J)$, there exists an open interval $Z'(J)$ such that
$Z(J)$ differs from the set $g_X^{-1}(Z'(J))$ on a null set.
Finally, the last constraint in Figure~\ref{fig-cp-checkers} yields that
the following holds for almost every $x\in J$:
the measure of $N_W^X(x)=Z(J)$,
which is $|Z(J)|=|Z'(J)|$, is equal to the measure of the set containing all $y\not\in Z(J)$ with $f_X(y)\ge\sup Z'(J)$.
It follows that the interval $Z'(J)$ is equal to $(1-2\gamma,\gamma)$ for some $\gamma\in (0,|X|/2]$.
Since the measures of $J$ and $Z'(J)$ are the same,
it must hold that $Z'(J)=J'$ where $J'\in\JJ'_A$ is the interval corresponding to $J$.
It follows that $W(x,y)=W_0(g(x),g(y))$ for almost every $(x, y) \in A\times X$.

\subsection{Iterated checker tiles}
\label{sec-rcheckers}

The checker graphon $W_C$ represents a large graph formed by disjoint complete graphs
on the $1/2,1/4,1/8,\ldots$ fractions of its vertices.
We now present a family of iterated checker graphons.
Informally speaking, we start with the checker graphon $W_C$ and
at each iteration, we paste a scaled copy of $W_C$ on each clique of the current graphon.
The formal definition is as follows.
Fix $k\in\NN_0$.
If $k=0$, define $I_{j_0}$, $j_0\in\NN_0$, to be the interval
$$I_{j_0}=\left[1-2^{-j_0},1-2^{-j_0-1}\right).$$
If $k>0$, we define $I_{j_0,\ldots,j_k}$ for $(j_0,\ldots,j_k)\in\NN_0^k$ as
$$I_{j_0,\ldots,j_k}=\left[\sup I_{j_0,\ldots,j_{k-1}}-2^{-j_k}|I_{j_0,\ldots,j_{k-1}}|,\sup I_{j_0,\ldots,j_{k-1}}-2^{-j_k-1}|I_{j_0,\ldots,j_{k-1}}|\right).$$
The \emph{$k$-iterated checker graphon} $W_C^k$ is then defined as follows:
$W_C^k(x,y)$ is equal to $1$ if there exists a $(k+1)$-tuple $(j_0,\ldots,j_k)\in\NN_0^k$ such that
both $x$ and $y$ belong to the interval $I_{j_0,\ldots,j_k}$, and
it is equal to $0$ otherwise.
The iterated checker graphons $W_C^0$, $W_C^1$ and $W_C^2$ are depicted in Figure~\ref{fig-iterated-checker}.
Note that $W_C^0=W_C$ and the definition of $I_{j_0}$ coincides with that given in Subsection~\ref{sec-checkers}.
We will also refer to an interval $I_{j_0,\ldots,j_k}$ as to a \emph{$k$-iterated binary interval}.

\begin{figure}
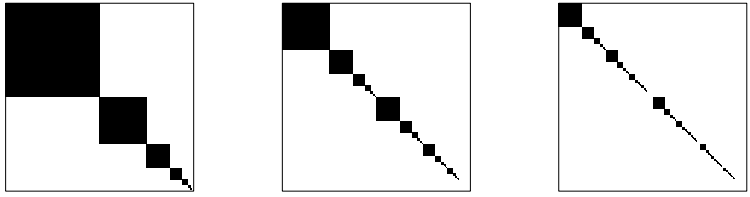

	\begin{center}
		\epsfbox{ffuniversal-300.mps} \hskip 10mm
		\epsfbox{ffuniversal-301.mps} \hskip 10mm
		\epsfbox{ffuniversal-302.mps}
	\end{center}
\caption{The iterated checker graphons $W_C^0$, $W_C^1$ and $W_C^2$.\label{fig-iterated-checker}}
\end{figure}

For $X\in\{B,C\}$ and $Y\in\{X,\ldots,E\}$, we set
$$W_0(\eta_X(x),\eta_Y(y))=\left\{
  \begin{array}{cl}
  W_C^1(x,y) & \mbox{if $X=B$, and}\\
  W_C^2(x,y) & \mbox{if $X=C$}\\
  \end{array}
  \right.$$
for all $x,y\in [0,1)^2$.
We also set the tile $D\times D$ to be such that 
$$W_0(\eta_D(x),\eta_D(y))=W_C^3(x,y)$$
for all $x,y\in [0,1)^2$.
This also defines the values of $W_0$ in the symmetric tiles,
i.e., the values for the tile $X\times Y$ determine the values for the tile $Y\times X$.

\begin{figure}[htbp]
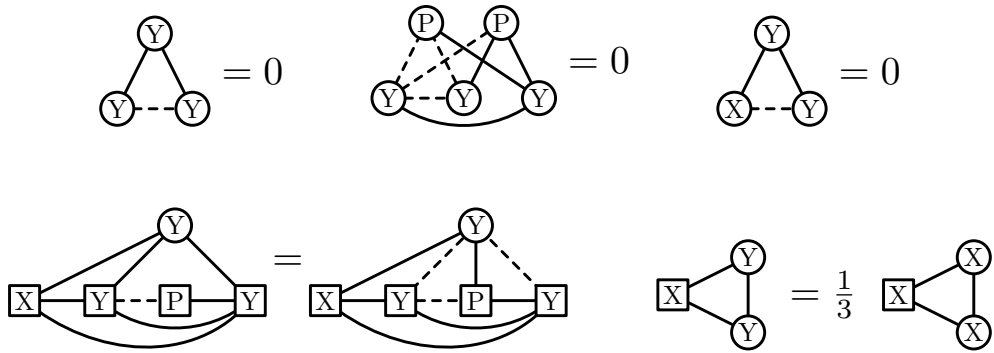

\begin{center}
\epsfbox{ffuniversal-77.mps} \hskip 10mm
\epsfbox{ffuniversal-78.mps} \hskip 10mm
\epsfbox{ffuniversal-67.mps} \vskip 10mm
\epsfbox{ffuniversal-68.mps} \hskip 10mm
\epsfbox{ffuniversal-69.mps}
\end{center}
\caption{The decorated constraints forcing the structure of the tiles $B^2$, $C^2$, and $D^2$,
         where $(X,Y)\in\{(A,B),(B,C),(C,D)\}$.
         \label{fig-rec-checkers}}
\end{figure}

\begin{figure}[htbp]
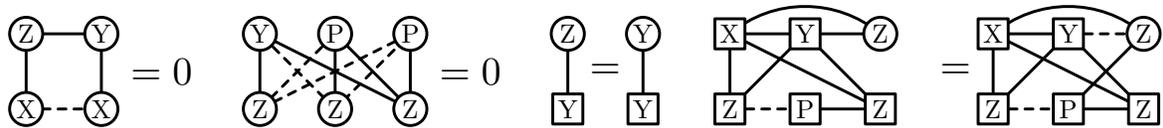

\begin{center}
\epsfbox{ffuniversal-70.mps} \hskip 5mm
\epsfbox{ffuniversal-72.mps} \hskip 5mm
\epsfbox{ffuniversal-71.mps} \hskip 5mm
\epsfbox{ffuniversal-73.mps}
\end{center}
\caption{The decorated constraints forcing the structure of the iterated checker graphons on the non-diagonal tiles,
         where $(X,Y)\in\{(A,B),(B,C)\}$ and $Z\in\{C,D,E,F\}$ if $X=A$ and $Z\in\{D,E,F\}$ if $X=B$.\label{fig-rec-cp-checkers}}
\end{figure}

Consider the decorated constraints depicted in Figures~\ref{fig-rec-checkers} and~\ref{fig-rec-cp-checkers}.
We first analyze the structure of the tile $B\times B$, then all the tiles $B\times Y$, $Y\in\{B,\ldots,E\}$,
then the tile $C\times C$, then all the tiles $C\times Y$, $Y\in\{C,\ldots,E\}$, before finishing with the tile $D\times D$.
Fix $(X,Y)$ to be one of the pairs $(A,B),(B,C)$ or $(C,D)$.
We assume that $W(x,y)=W_0(g(x),g(y))$ for almost every $(x,y)\in X\times X$ and almost every $(x,y)\in X\times Y$, and
our goal is to show that $W(x,y)=W_0(g(x),g(y))$ for almost every $(x,y)\in Y\times Y$.

The first two constraints on the first line in Figure~\ref{fig-rec-checkers} imply that
there exists a collection $\JJ'_Y$ of disjoint open intervals such that
the following holds for almost every $(x,y)\in Y^2$:
$W(x,y)$ is equal to $1$ if and only if
$f_Y(x)$ and $f_Y(y)$ belong to the same interval $J'\in\JJ'_Y$, and
it is equal to $0$ otherwise.
The third constraint on the first line in Figure~\ref{fig-rec-checkers} yields that
each interval in $\JJ'_Y$ is a subinterval of an interval in $\JJ'_X$.

The first constraint on the second line in Figure~\ref{fig-rec-checkers} yields that
the following holds for almost every triple $(x,y,y')\in X\times Y\times Y$ such that
$f_Y(y)$ and $f_Y(y')$ are from the same interval $J'_Y\in\JJ'_Y$ and
$f_X(x)$ is from the interval $J'_X\in\JJ'_X$ that is a superinterval of $J'_Y$:
the measure of $J'_Y$ (which is equal to the left hand side of the equality) is the same as
the measure of the set of all $y''$ such that $f_Y(y'')\in J'_X$ and $f_Y(y'')>\sup J'_Y$ (which is equal to the right hand side).
It follows that
$$J'_Y=(\sup J'_X-2\gamma,\sup J'_X-\gamma)$$
for some $\gamma\in (0,|J'_X|/2]$.
The very last constraint in Figure~\ref{fig-rec-checkers} yields for every $J'_X\in\JJ'_X$ that
$$\sum_{J'_Y\in\JJ'_Y,J'_Y\subseteq J'_X}|J'_Y|^2=\frac{1}{3}|J'_X|^2.$$
However, this is only possible if the set $\JJ'_Y$ contains all intervals of the form
$(\sup J'_X-2\gamma,\sup J'_X-\gamma)$ for every $J'_X\in\JJ'_X$ and every $\gamma=|J'_X|\cdot 2^{-i}$, $i\in\NN$.
It follows that $W(x,y)=W_0(g(x),g(y))$ for almost every $(x,y)\in Y\times Y$.

We continue to fix a pair $(X,Y)\in\{(A,B),(B,C)\}$, but in addition we now 
fix $Z\in\{Y,\ldots,E\}\setminus\{Y\}$ where $Y\in\{B,C\}$.
Our next goal is to show that $W(y,z)=W_0(g(y),g(z))$ for almost every $(y,z)\in Y\times Z$,
which is achieved using the decorated constrains given in Figure~\ref{fig-rec-cp-checkers}.
The first constraint in Figure~\ref{fig-rec-cp-checkers} implies that
it holds for almost every $y\in Y$ that
$f_Z(N_W^Z(y))\sqsubseteq J'_X$ where $J'_X$ is the unique interval of $\JJ'_X$ containing $f_Y(y)$.
The second constraint in Figure~\ref{fig-rec-cp-checkers} yields that for almost every $y\in Y$,
there exists an interval $J_y$ such that $N_W^Z(y)$ and $f_Z^{-1}(J_y)$ differ on a null set,
$W(y,z)=1$ for almost every $z\in f_Z^{-1}(J_y)$, and
$W(y,z)=0$ for almost every $z\in Z\setminus f_Z^{-1}(J_y)$.
The third constraint yields that
$\deg_W^Y(y)=\deg_W^Z(y)$ for almost every $y\in Y$,
i.e., the measure of $J_y$ is the same as the measure of the interval in $\JJ_Y$ containing $f_Y(y)$.

Finally, the last constraint in Figure~\ref{fig-rec-cp-checkers} implies that
almost every quadruple $x\in X$, $y\in Y$, $z,z'\in Z$ such that
$f_Z(z)<f_Z(z')$, $f_Z(z)$ and $f_Z(z')$ belong to the interval $J_y$,
which is a subinterval of $J'_X\in\JJ'_X$ with $f_X(x)\in J'_X$,
satisfies that
the measure of $N_W^Z(y)$ (note that $N_W^Z(y)$ is a subset of $f_Z^{-1}(J'_X)$) and
the measure of all $z'\in f_Z^{-1}(J'_X)\setminus N_W^Z(y)$ with $f_Z(z')>\sup J_y$ are equal.
In particular, the interval $J_y$ is of the form
$(\sup J'_X-2\gamma,\sup J'_X-\gamma)$ for almost every $y\in Y$,
where $J'_X$ is the unique interval of $\JJ'_X$ containing $f_Y(y)$.
Hence, the interval $J_y$ is equal to the interval in $\JJ'_Y$ containing $f_Y(y)$ for almost every $y\in Y$.
It follows that $W(y,z)=W_0(g(y),g(z))$ for almost every $(y,z)\in Y\times Z$.

\section{Encoding the target graphon}

In this section, we describe how the densities in dyadic squares of the graphon $W_F$
are wired in a single binary sequence, which will be encoded in the tile $B\times F$.
To achieve this, we need to fix a mapping $\varphi$ from $\NN_{0}^4$ to $\NN_{0}$.
Let us define this mapping as follows. 
The $4$-tuples $(a,b,c,d)$ with the same sum $s=a+b+c+d$ of their entries
are injectively mapped to the numbers between ${s+3\choose 4}$ and ${s+4\choose 4}-1$
in the lexicographic order. For example, $\varphi(0,0,0,1)=1$ and $\varphi(0,1,0,0)=3$.

\subsection{Encoding dyadic square densities}
\label{sec-BxN-tile}

The tile $B\times F$ encodes the edge densities on all dyadic squares of $W_F$.
Let $I^d(s)$ be the interval $\left[\frac{s}{2^d},\frac{s+1}{2^d}\right)$, and
define for $d,s,t\in\NN_0$ the value $\delta(d,s,t)$ as
$$\delta(d,s,t)=2^{2d}\cdot\int_{I^d(s)\times I^d(t)} W_F(x,y) \dd x\dd y$$
if $0\le s,t\le 2^d-1$, and $\delta(d,s,t)=0$, otherwise.
If $W_F$ is the one graphon, i.e., $W_F$ is equal to $1$ almost everywhere, we fix $r=1$.
Otherwise, we fix $r\in [0,1)$ to be the unique real satisfying that
\begin{equation}
\delta(d,s,t)=\sum_{p=0}^\infty 2^{-p}r_{\varphi(d,s,t,p)+1}\;\mbox{, and}\label{eq-R}
\end{equation}
that for all $d,s,t\in\NN_0$,
the value of $r_{\varphi(d,s,t,p)+1}$ is equal to zero for infinitely many $p\in\NN_0$,
where $r_k$ is the $k$-th bit in the standard binary representation of $r$ (with the first bit
following immediately the decimal point).
The standard binary representation is the unique representation with infinitely many digits equal to zero
If $W_F$ is the one graphon, we set $r_k=1$ for every $k\in\NN$.
Observe that $r$ is not a multiple of an inverse power of two
unless $W_F$ is the zero graphon or the one graphon ($r\in\{0,1\}$ in these two cases).

\begin{figure}[htbp]
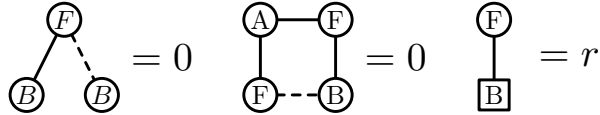

	\begin{center}
		\epsfbox{ffuniversal-21.mps} \hskip 5mm
		\epsfbox{ffuniversal-22.mps} \hskip 5mm
		\epsfbox{ffuniversal-23.mps} 
	\end{center}
	\caption{The decorated constraints forcing the structure of the tile $B\times F$.\label{fig:inputTile}}
\end{figure}

We now define $W_0(\eta_B(x),\eta_F(y))=r_{k+1}$ for $x \in [0,1]$ and $y \in I_{k}$, $k\in\NN_0$, and
force the corresponding structure of the tile $B\times F$.
Consider the decorated constraints depicted in Figure~\ref{fig:inputTile}.
The first constraint implies that $\deg_{W}^{B}(x) \in \{0,1\}$ for almost every $x \in F$. 
In particular, $W$ is $\{0,1\}$-valued almost everywhere on $B\times F$. 
The second constraint implies that for every $k \in \NN_0$ and 
for almost every $x,x' \in f_{F}^{-1}(I_k)$, $\deg_{W}^{B}(x)= \deg_{W}^{B}(x')$.
Let $r'_k$ be the common degree $\deg_{W}^{B}(x)$ of the vertices $x\in f_{F}^{-1}(I_{k-1})$, $k\in\NN$.
The last constraint in the figure implies that 
$$\sum_{k\in\NN}2^{-k}r_k=\sum_{k\in\NN}2^{-k}r'_k\;\mbox{.}$$
Since $r$ is not a non-zero multiple of an inverse power of two unless $r\in\{0,1\}$,
it follows that $r_k=r'_k$ for all $k\in\NN$.
If $r\in\{0,1\}$, it follows that $r_k=r'_k=r$ trivially.
We conclude that $W(x,y) = W_0(g(x), g(y))$ for almost every $(x, y) \in B \times F$.

\subsection{Matching tile}
\label{sec-DxN-tile}

In this subsection, we introduce and analyze the tile $D\times F$.
This tile is supposed to link the $4$-fold indexing to linear indexing.
Formally,
we define $W_0(\eta_D(x),\eta_F(y))$ to be equal to $1$ if $x \in I_{a,b,c,d}$ and $y \in I_{\varphi(a,b,c,d)}$ for some $(a,b,c,d)\in\NN_0^4$ and to be equal to $0$, otherwise.

\begin{figure}[htbp]
	\begin{center}
		\epsfbox{ffuniversal-17.mps} \hskip 5mm
		\epsfbox{ffuniversal-79.mps} \hskip 5mm
		\epsfbox{ffuniversal-29.mps} \hskip 5mm
		\epsfbox{ffuniversal-16.mps} \vskip 10mm
		\epsfbox{ffuniversal-15.mps} \vskip 10mm
		\epsfbox{ffuniversal-18.mps} \vskip 10mm
		\epsfbox{ffuniversal-19.mps} \vskip 10mm
		\epsfbox{ffuniversal-20.mps}
	\end{center}
	\caption{The decorated constraints forcing the structure of the tile $D\times F$.\label{fig:dxn}}
\end{figure}

Consider the decorated constraints in Figure~\ref{fig:dxn}. 
The first constraint implies that $W$ is $\{0,1\}$-valued almost everywhere in $D\times F$ and 
that for almost every $x\in D$, it holds that $N_{W}^{F}(x) = \cup_{k\in K_x}f_{F}^{-1}(I_k)$ up to a null set
for some $K_x \subseteq \NN_0$.
The second constraint implies that for almost every vertex of $D$, the set $K_x$ has cardinality 0 or 1.
The third constraint yields that for every $(a,b,c,d)\in\NN_0^4$,
the set $K_x$ is the same for almost all $x\in D$ with $f_D(x)\in I_{a,b,c,d}$.
Finally,
the last constraint in the first line implies that 
the sets $K_x$ and $K_y$ are disjoint for almost all $x,y\in D$ with $f_D(x)$ and $f_D(y)$
from different $3$-iterated binary intervals.

Let $\tau(a,b,c,d)$ be the common degree $\deg_{W}^{F}(x)$ of vertices $x \in f_{D}^{-1}(I_{a,b,c,d})$.
If $K_x$ is empty for almost all $x \in f_{D}^{-1}(I_{a,b,c,d})$, then $\tau(a,b,c,d)=0$;
otherwise, $\tau(a,b,c,d)$ is $2^{-k-1}$, where $k$ is the unique integer contained in $K_x$
for almost all $x \in f_{D}^{-1}(I_{a,b,c,d})$.
Note that the non-zero values of $\tau(a,b,c,d)$ are distinct for distinct $(a,b,c,d)\in\NN_0^4$.
Observe that the edge density in the tile $D\times F$ is the following:
$$\int_{D\times F} W(x,y) \dd x \dd y=  \sum_{(a,b,c,d) \in \NN_{0}^4} |I_{a,b,c,d}| \tau(a,b,c,d)
= \sum_{s \in \NN_{0}} 2^{-(s+4)}\sum_{\substack{(a,b,c,d) \in \NN_{0}^4 \\a+b+c+d = s}} \tau(a,b,c,d).$$
The constraint in the second line in Figure~\ref{fig:dxn} yields the following:
$$\sum_{s \in \NN_{0}} 2^{-(s+4)}\sum_{\substack{(a,b,c,d) \in \NN_{0}^4 \\a+b+c+d = s}} \tau(a,b,c,d) =
  \sum_{s \in \NN_{0}} 2^{-(s+4)}\sum_{\substack{(a,b,c,d) \in \NN_{0}^4 \\a+b+c+d = s}} 2^{-\varphi(a,b,c,d)-1}\;\mbox{.}$$
Since the non-zero values of $\tau(a,b,c,d)$ are mutually distinct,
this equality can hold only if
$$\{\tau(a,b,c,d)\mbox{ s.t. } a+b+c+d=s\}=\{2^{-\varphi(a,b,c,d)-1}\mbox{ s.t. } a+b+c+d=s\}$$
for every $s\in\NN_0$.

The constraint in the third line in Figure~\ref{fig:dxn} implies that
$$\sum_{(a,b,c,d) \in \NN_{0}^4} 2^{-a-b-c-d-4}\cdot 2^{-a-b-c-3}\cdot \tau(a,b,c,d)=
  \hskip -2ex
  \sum_{(a,b,c,d) \in \NN_{0}^4} 2^{-a-b-c-d-4}\cdot 2^{-a-b-c-3}\cdot 2^{-\varphi(a,b,c,d)-1}\;\mbox{.}$$
Since it holds for every $s\in\NN_0$ that
$$\{\tau(a,b,c,d)\mbox{ s.t. } a+b+c+d=s\}=\{2^{-\varphi(a,b,c,d)-1}\mbox{ s.t. } a+b+c+d=s\}\;\mbox{,}$$
we get that the following holds for all $d\in\NN_0$ and $s\in\NN_0$:
$$\{\tau(a,b,c,d)\mbox{ s.t. } a+b+c=s\}=\{2^{-\varphi(a,b,c,d)-1}\mbox{ s.t. } a+b+c=s\}\;\mbox{.}$$

Similarly, the constraint in the fourth line implies that
$$\sum_{(a,b,c,d) \in \NN_{0}^4} 2^{-a-b-c-d-4}\cdot 2^{-a-b-2}\cdot \tau(a,b,c,d)=
  \sum_{(a,b,c,d) \in \NN_{0}^4} 2^{-a-b-c-d-4}\cdot 2^{-a-b-2}\cdot 2^{-\varphi(a,b,c,d)-1}\;\mbox{,}$$
which yields that it holds for all $c,d,s\in\NN_0$ that
$$\{\tau(a,b,c,d)\mbox{ s.t. } a+b=s\}=\{2^{-\varphi(a,b,c,d)-1}\mbox{ s.t. } a+b=s\}\;\mbox{.}$$
Finally, the constraint in the fifth line implies that
$$\sum_{(a,b,c,d) \in \NN_{0}^4} 2^{-a-b-c-d-4}\cdot 2^{-a-1}\cdot \tau(a,b,c,d)=
  \sum_{(a,b,c,d) \in \NN_{0}^4} 2^{-a-b-c-d-4}\cdot 2^{-a-1}\cdot 2^{-\varphi(a,b,c,d)-1}\;\mbox{,}$$
which implies that $\tau(a,b,c,d)=2^{-\varphi(a,b,c,d)-1}$ for all $a,b,c,d\in\NN_0$.
It follows that $W(x,y) = W_0(g(x), g(y))$ for almost every $(x, y) \in D \times F$.

\subsection{Collating dyadic square densities}
\label{sec-DxM-tile}
The tile $D\times E$ is designed to group the values of $\delta(d,s,t)$.
We set $W_0(\eta_D(x),\eta_E(y))=r_{\varphi(d,s,t,p)+1}$ for all $x\in I_{d,s,t,p}$, $y \in I_{d,s,t}$ and
$(d,s,t,p)\in\NN_0^4$, and we set $W_0(\eta_D(x),\eta_E(y))$ to be zero elsewhere.
An example of a tile with this structure is depicted in Figure~\ref{fig-ex-dens-limit}.
Note that the density of the square $\eta_D(I_{d,s,t})\times \eta_E(I_{d,s,t})$ is equal to $\delta(d,s,t)$.

\begin{figure}[htbp]
	\begin{center}
		\epsfbox{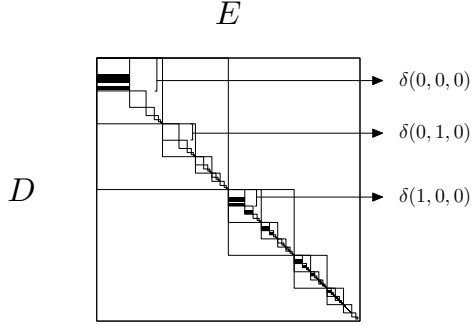} 
	\end{center}
	\caption{An example of the $D\times E$ tile.\label{fig-ex-dens-limit}}
\end{figure}

Consider the decorated constraints depicted in the Figure~\ref{fig:limitTile}.
The first constraint implies that $W(x,y)=0$ for almost every $(x,y)$ such that
$x \in f_{D}^{-1}(I_{d,s,t})$, $y \in f_{E}^{-1}(I_{d',s',t'})$ and $(d,s,t)\neq (d',s',t')$.
The second constraint yields that for almost every $x\in D$ such that $x\in f_D^{-1}(I_{d,s,t})$,
$\deg_{W}^{E}(x)$ is either $0$ or $2^{-d-s-t-3}$.
In particular, $W(x,y)\in\{0,1\}$ for almost every $(x,y)\in D\times E$.

\begin{figure}[htbp]
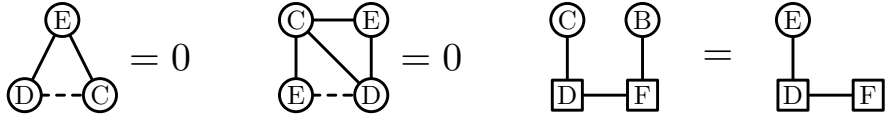

	\begin{center}
		\epsfbox{ffuniversal-25.mps} \hskip 10mm
		\epsfbox{ffuniversal-27.mps} \hskip 10mm
		\epsfbox{ffuniversal-28.mps} 
	\end{center}
	\caption{The decorated constraints forcing the structure of the tile $D\times E$.\label{fig:limitTile}}
\end{figure}

We now analyze the last decorated constraint depicted in the Figure~\ref{fig:limitTile}.
This constraint implies that the following holds for almost every choice of a $D$-root $x$ and an $F$-root $y$ such that
$f_D(x)\in I_{d,s,t,p}$ and $f_F(y)\in I_{\varphi(d,s,t,p)}$:
$$ 2^{-d-s-t-3}\cdot r_{\varphi(d,s,t,p)+1}=\deg_{W}^{E}(x)\;\mbox{.}$$
It follows that $W(x,y) = W_0(g(x), g(y))$ for almost every $(x, y) \in D \times E$.

\section{Forcing the target graphon}

In this section, we force the densities in each dyadic square of the tile $G\times G$ to be as in the graphon $W_F$ and
we argue that the graphon inside the tile is the graphon $W_F$.
To achieve this, we first need to set up some auxiliary structures.

\subsection{Dyadic square indices}
\label{sec-dyadic-sq}

We start with the tiles $E\times E$, $E\times F$ and $F\times F$,
which represent splitting the $0$-iterated binary interval $I_k$ into $2^k$ and $2^{2k}$ equal length parts.
Formally, $W_0(\eta_E(x),\eta_E(y))$ is equal to $1$ for $x,y\in [0,1)$
if $x$ and $y$ belong to the same $0$-iterated binary interval $I_k$ and
$$\left\lfloor\frac{x-\min I_k}{|I_k|}\cdot 2^k\right\rfloor=
  \left\lfloor\frac{y-\min I_k}{|I_k|}\cdot 2^k\right\rfloor\;\mbox{,}$$
and it is equal to $0$ otherwise.
Similarly, $W_0(\eta_F(x),\eta_F(y))$ is equal to $1$ for $x,y\in [0,1)$
if $x$ and $y$ belong to the same $0$-iterated binary interval $I_k$ and
$$\left\lfloor\frac{x-\min I_k}{|I_k|}\cdot 2^{2k}\right\rfloor=
  \left\lfloor\frac{y-\min I_k}{|I_k|}\cdot 2^{2k}\right\rfloor\;\mbox{,}$$
and it is equal to $0$ otherwise.
An illustration can be found in Figure~\ref{fig-example-enum-xy}.
Finally, we set $W_0(\eta_E(x),\eta_F(y))=W_0(\eta_E(x),\eta_E(y))$ for all $x,y\in [0,1)$.

\begin{figure}[htbp]
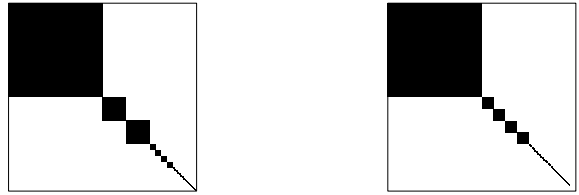

	\begin{center}
\epsfbox{ffuniversal-305.mps}\hskip 25mm
\epsfbox{ffuniversal-306.mps}
\end{center}
\caption{Representation of the tiles $E\times E$ and $F\times F$.\label{fig-example-enum-xy}}
\end{figure}

\begin{figure}[htbp]
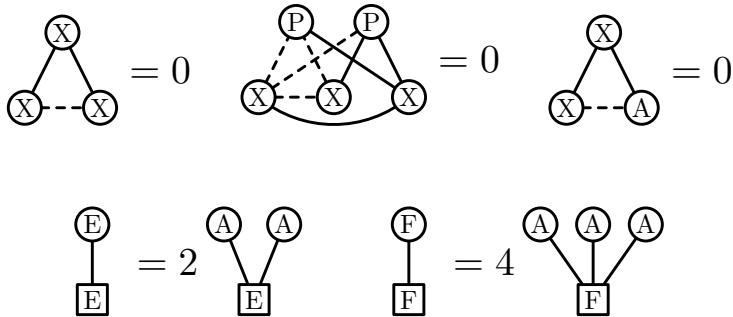

\begin{center}
\epsfbox{ffuniversal-237.mps} \hskip 5mm
\epsfbox{ffuniversal-238.mps} \hskip 5mm
\epsfbox{ffuniversal-34.mps} \vskip 10mm
\epsfbox{ffuniversal-35.mps} \hskip 10mm
\epsfbox{ffuniversal-37.mps} 
\end{center}
\caption{The decorated constraints forcing the tiles $E\times E$ and $F\times F$, where $X \in \{E, F\}$.\label{fig-benum-xy}}
\end{figure}

Fix $X\in\{E,F\}$ and consider the decorated constraints given in Figure~\ref{fig-benum-xy}.
The three constraints on the first line in Figure~\ref{fig-benum-xy}
imply that $W(x,y)\in\{0,1\}$ for almost every pair $(x,y)\in X\times X$ and that
there exists a collection of disjoint open intervals $\JJ_X$, which are subintervals of $0$-iterated binary intervals $I_k$, such that
$W(x,y)=1$ if and only if $f_X(x)$ and $f_X(y)$ belong to the same interval $J\in\JJ_X$ (except for a subset of $X\times X$ of measure zero).

If $X=E$, then the first constraint on the second line in Figure~\ref{fig-benum-xy} implies that
$\deg_W^E(x)=2^{-2k-1}$ for almost every $x\in f_E^{-1}(J_k)$,
i.e., if $J\in\JJ_E$ and $J\subseteq I_k$, then $|J|=2^{-k}|I_k|$.
Hence, the set $\JJ_E$ is formed precisely by the intervals
$$\left(\min I_k+\frac{\ell-1}{2^k}|I_k|, \min I_k+\frac{\ell}{2^k}|I_k|\right)$$
for $k\in\NN_0$ and $\ell\in [2^k]$.
Hence, $W(x,y)=W_0(g(x),g(y))$ for almost every $(x,y)\in E\times E$.
The analogous argument using the last constraint on the second line in Figure~\ref{fig-benum-xy} gives that
$\JJ_F$ is formed precisely by the intervals
$$\left(\min I_k+\frac{\ell-1}{2^{2k}}|I_k|, \min I_k+\frac{\ell}{2^{2k}}|I_k|\right)$$
for $k\in\NN_0$ and $\ell\in [2^{2k}]$,
which leads to the conclusion that $W(x,y)=W_0(g(x),g(y))$ for almost every $(x,y)\in F\times F$.

\begin{figure}[htbp]
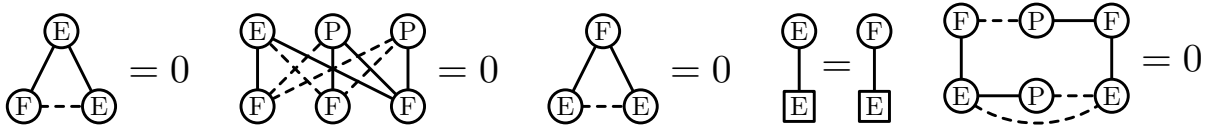

\begin{center}
\epsfbox{ffuniversal-39.mps} \hskip 5mm
\epsfbox{ffuniversal-40.mps} \hskip 5mm
\epsfbox{ffuniversal-311.mps} \hskip 5mm
\epsfbox{ffuniversal-41.mps} \hskip 5mm
\epsfbox{ffuniversal-42.mps}
\end{center}
\caption{The decorated constraints forcing the structure of the tile $E\times F$.\label{fig-benum-x-cp}}
\end{figure}

It remains to analyze the tile $E\times F$.
Consider the decorated constraints given in Figure~\ref{fig-benum-x-cp}.
The first two constraints in Figure~\ref{fig-benum-x-cp} imply that for every $J\in\JJ_E$,
there exists an open interval $K(J)$ such that the following holds for almost every $(x,y)\in E\times F$:
$W(x,y)=1$ if $f_E(x)\in J$ and $f_F(y)\in K(J)$ for some $J\in\JJ_E$, and
$W(x,y)=0$ otherwise.
The third constraint implies that the intervals $K(J)$ and $K(J')$ are disjoint for $J\not=J'$, and
the fourth constraint yields that the measure of $K(J)$ is equal to $|J|$.
Finally, the last constraint implies that
if an interval $J_1\in\JJ_E$ precedes an interval $J_2\in\JJ_E$,
then $K(J_1)$ precedes the interval $K(J_2)$.
We conclude that $K(J)=J$ for every $J\in\JJ_E$.
Consequently, $W(x,y)=W_0(g(x),g(y))$ for almost every $(x,y)\in E\times F$.

\subsection{Referencing dyadic squares}
\label{sec-ref-dyadic-sq}
We now describe the tiles $E\times G$ and $F\times G$,
which allow referencing particular dyadic squares by the intervals from $\JJ_E$ and $\JJ_F$.
Formally, $W_0(\eta_E(x),\eta_G(y))=1$ for $x,y\in [0,1)$ if and only if
$x\in I_k$ and
$$\left\lfloor\frac{x-\min I_k}{|I_k|}\cdot 2^k\right\rfloor=
  \left\lfloor y\cdot 2^k\right\rfloor\;\mbox{,}$$
and it is equal to $0$ otherwise.
Similarly, $W_0(\eta_E(x),\eta_G(y))=1$ for $x,y\in [0,1)$ if and only if
$x\in I_k$ and
$$\left\lfloor\frac{x-\min I_k}{|I_k|}\cdot 2^{2k}\right\rfloor\;\equiv
  \left\lfloor y\cdot 2^k\right\rfloor\;(\mod 2^k)\;\mbox{,}$$
and it is equal to $0$ otherwise.
The tiles are depicted in Figure~\ref{fig-bin-tiles}.

\begin{figure}[htbp]
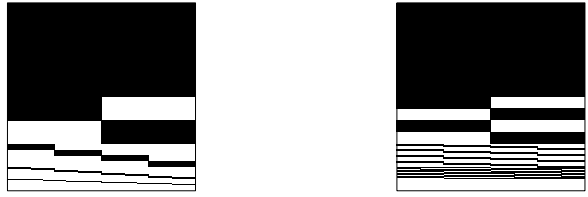

	\begin{center}
		\epsfbox{ffuniversal-309.mps} \hskip 25mm
		\epsfbox{ffuniversal-310.mps}
	\end{center}
	\caption{The tiles $E\times G$ and $F \times G$.\label{fig-bin-tiles}}
\end{figure}

\begin{figure}[htbp]
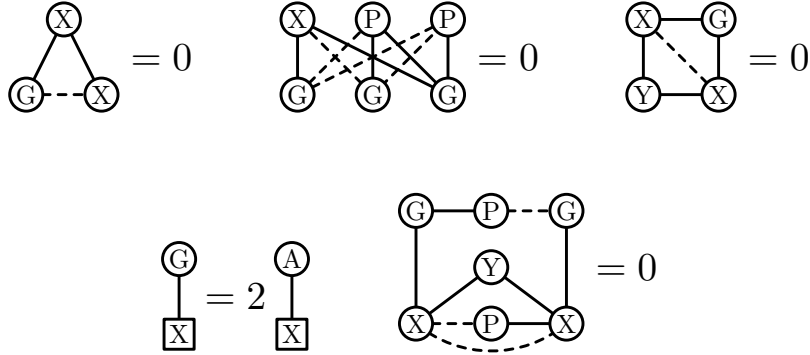

\begin{center}
\epsfbox{ffuniversal-45.mps} \hskip 10mm
\epsfbox{ffuniversal-44.mps} \hskip 10mm
\epsfbox{ffuniversal-46.mps} \vskip 10mm
\epsfbox{ffuniversal-43.mps} \hskip 10mm
\epsfbox{ffuniversal-47.mps} 
\end{center}
\caption{The decorated constraints forcing the structure of the tiles $E\times G$ and $F \times G$,
         where $(X, Y) \in \{(E, A), (F, E)\}$.\label{fig-bin}}
\end{figure}

Fix $X\in\{E,F\}$ and set $Y=A$ if $X=E$, and $Y=E$ if $X=F$.
Consider the decorated constraints given in Figure~\ref{fig-bin}.
The first two constraints in Figure~\ref{fig-bin} imply that for every $J\in\JJ_X$,
there exists an open interval $K^X(J)$ such that the following holds for almost every $(x,y)\in X\times G$:
$W(x,y)=1$ if $f_E(x)\in J$ and $f_F(y)\in K^X(J)$ for some $J\in\JJ_X$, and
$W(x,y)=0$ otherwise.

If $X=E$, the third constraint on the first line in Figure~\ref{fig-bin} implies that
if $J,J'\in\JJ_E$ and $J$ and $J'$ are subintervals of the same $0$-iterated binary interval,
then $K^E(J)$ and $K^E(J')$ are disjoint;
the second constraint on the second line implies that
if $J$ precedes $J'$ inside the same $0$-iterated binary interval, then $K^E(J)$ precedes $K^E(J')$.
Likewise, if $X=F$, the third constraint on the first line gives that
if $J,J'\in\JJ_F$ and $J$ and $J'$ are subintervals of the same interval contained in $\JJ_E$,
then $K^F(J)$ and $K^F(J')$ are disjoint, and
the second constraint on the second line gives that
if $J$ precedes $J'$ inside the same interval of $\JJ_E$, then $K^F(J)$ precedes $K^F(J')$.

Finally, the first constraint on the second line implies that $\deg_W^G(x)=2\deg_W^A(x)$
for almost every $x\in X$.
This implies that if $J$ is a subinterval of a $0$-iterated binary interval $J_k$, then $|K^X(J)|=2^{-k}$.
We conclude that $W(x,y)=W_0(g(x),g(y))$ for almost every $(x,y)\in X\times G$.

\subsection{Indexing dyadic squares}
\label{sec-forc-rec}

We now describe the tile $C\times F$,
which allows referencing particular dyadic squares by $2$-iterated binary intervals;
the tile is depicted in Figure~\ref{fig-inf-enum-tiles}.
Formally, $W_0(\eta_C(x),\eta_F(y))=1$ for $x,y\in [0,1)$ if and only if
$x\in I_{d,s,t}$, $y\in I_d$, $s<2^d$, $t<2^d$, and
$$\left\lfloor\frac{y-\min I_d}{|I_d|}\cdot 2^{2d}\right\rfloor=2^d\cdot s+t\mbox{,}$$
and it is equal to $0$ otherwise.

\begin{figure}[htbp]
	\begin{center}
		\epsfbox{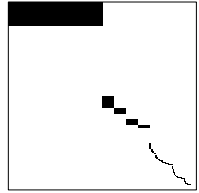}
	\end{center}
	\caption{The tile $C \times F$.\label{fig-inf-enum-tiles}}
\end{figure}

\begin{figure}[htbp]
\begin{center}
\epsfbox{ffuniversal-51.mps} \hskip 10mm
\epsfbox{ffuniversal-52.mps} \hskip 10mm
\epsfbox{ffuniversal-53.mps} \hskip 10mm
\epsfbox{ffuniversal-123.mps} \vskip 10mm
\epsfbox{ffuniversal-55.mps} \hskip 5mm
\epsfbox{ffuniversal-54.mps} \hskip 5mm
\epsfbox{ffuniversal-124} \hskip 5mm
\epsfbox{ffuniversal-125} \vskip 10mm
\epsfbox{ffuniversal-56.mps}
\end{center}
\caption{The decorated constraints forcing the structure of the tile $C \times F$. \label{fig-inf-enum}}
\end{figure}

Consider the constrains given in Figure~\ref{fig-inf-enum}.
The four constraints in the first line in Figure~\ref{fig-inf-enum} imply the following:
there exists a function $h:\NN_0^3\to\NN_0\cup\{\infty\}$ such that
$h(d,s,t)\in\{0,\ldots,2^{2d}-1\}\cup\{\infty\}$ and
the following holds for almost every $(x,y)\in C\times F$:
$W(x,y)=1$ if and only if $f_C(x)\in I_{d,s,t}$ and
$$\left\lfloor\frac{f_F(y)-\min I_d}{|I_d|}\cdot 2^{2d}\right\rfloor=h(d,s,t)\;\mbox{,}$$
$W(x,y)=0$ otherwise.
In particular, if $h(d,s,t)=\infty$, then $W(x,y)=0$ for almost every $x\in f_C^{-1}(I_{d,s,t})$ and $y\in F$.

The first constraint in the second line in Figure~\ref{fig-inf-enum} implies that
if $(d,s,t)\not=(d,s',t')$, then $h(d,s,t)\not=h(d,s',t')$ unless $h(d,s,t)=h(d,s',t')=\infty$.
The second constraint in the second line then implies that
if $h(d,s,t)$ and $h(d,s',t')$ are both different from $\infty$ and
$h(d,s,t)<h(d,s',t')$, then either $s=s'$ and $t<t'$ or $s<s'$.
The third constraint yields that
if $h(d,s,t)$ and $h(d,s',t')$ are both different from $\infty$ and $s\not=s'$ (the two $C$-roots
determine the values of the triples $(d,s,t)$ and $(d,s',t')$ and
their adjacencies to the $B$-root imply that $s\not=s'$),
then $\lfloor h(d,s,t)/2^d\rfloor\not=\lfloor h(d,s',t')/2^d\rfloor$.
Similarly, the last constraint yields that
if $h(d,s,t)$ and $h(d,s,t')$ are both different from $\infty$,
then $\lfloor h(d,s,t)/2^d\rfloor=\lfloor h(d,s,t')/2^d\rfloor$.
Consequently, for any $d$, there are at most $2^d$ values of $s$ such that $h(d,s,t)\not=\infty$ for some $t\in\NN_0$, and
for any $d$ and $s$, there are at most $2^d$ values of $t$ such that $h(d,s,t)\not=\infty$.

The density of the tile $C\times F$ is equal to the following:
\begin{equation}
\sum_{d=0}^{\infty}2^{-(3d+1)}\sum_{\substack{s,t\in\NN_0\\h(d,s,t)\not=\infty}} 2^{-d-s-t-3}
\label{eq-CN}
\end{equation}
Since for any $d$, there are at most $2^d$ values of $s$ such that $h(d,s,t)\not=\infty$ for some $t\in\NN_0$, and
for any $d$ and $s$, there are at most $2^d$ values of $t$ such that $h(d,s,t)\not=\infty$,
the inner sum in (\ref{eq-CN}) is at most
$$\sum_{s,t=0}^{2^d-1} 2^{-d-s-t-3}\;\mbox{.}$$
The constraint on the third line in Figure~\ref{fig-inf-enum} now yields that
$h(d,s,t)\not=\infty$ if and only if $s<2^d$ and $t<2^d$.
Since it holds that if $h(d,s,t)\not=\infty$, $h(d,s',t')\not=\infty$ and $h(d,s,t)<h(d,s',t')$,
then either $s=s'$ and $t<t'$ or $s<s'$,
it follows that $h(d,s,t)=2^d\cdot s+t$ for all $d$, $s<2^d$ and $t<2^d$.
It follows that $W(x,y)=W_0(g(x),g(y))$ for almost every $(x,y)\in C\times F$.

\subsection{Forcing densities}
\label{sec-rec-tile}

We now focus on the tile $G\times G$, which contains the graphon $W_F$ itself;
we define the value $W_0(\eta_G(x),\eta_G(y))$ to be equal to $W_F(x,y)$ for every $(x,y)\in [0,1)^2$.

\begin{figure}[htbp]
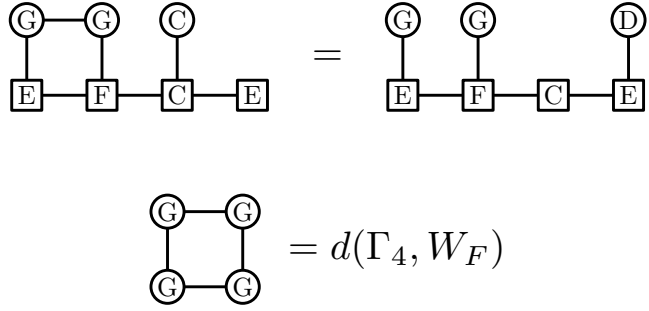

\begin{center}
\epsfbox{ffuniversal-102}\vspace{10mm}\\
\epsfbox{ffuniversal-304}
\end{center}
\caption{The decorated constraint forcing the $G\times G$ tile.\label{fig-rec-graphon}}
\end{figure}

Consider the first decorated constraint given in Figure~\ref{fig-rec-graphon}.
Almost every choice of the roots of the constraint satisfies the following:
if $x\in F$ is the $F$-root,
$f_F(x)\in I_d$, $d\in\NN_0$, and 
$$\left\lfloor\frac{f_F(x)-\min I_d}{|I_d|}\cdot 2^{2d}\right\rfloor=s\times 2^d+t\;\mbox{,}$$
where $s,t\in\{0,\ldots,2^d-1\}$,
then the left $E$-root $y\in E$ satisfies that $f_E(y)\in I_d$ and
$$\left\lfloor\frac{f_E(y)-\min I_d}{|I_d|}\cdot 2^d\right\rfloor=s\;\mbox{.}$$
Moreover,
the $C$-root $y'\in C$ and the right $E$-root $y''\in E$ satisfy that
$f_C(y')\in I_{d,s,t}$, and $f_E(y'')\in I_{d,s,t}$.
The left hand side of the density constraint is then equal to
$$2^{-d-s-t-3}\int_{f_G^{-1}\left(I^d(s)\right)\times f_G^{-1}\left(I^d(t)\right)} W(x,y)\dd x\dd y\;\mbox{,}$$
and the right hand side of the density constraint is equal to
$$2^{-2d}\cdot  
  \deg_W^D(y'')=2^{-2d}\cdot2^{-d-s-t-3}\cdot\delta(d,s,t)\;\mbox{.}$$
It follows that
\begin{equation}
2^{2d}\int_{f_G^{-1}\left(I^d(s)\right)\times f_G^{-1}\left(I^d(t)\right)} W(x,y)\dd x\dd y=
\delta(d,s,t)\;\mbox{.}\label{eq-square}
\end{equation}

Fix a measurable bijection $\psi:[0,1]\to G$ such that $|\psi^{-1}(X)|=|X|/|G|$ for every measurable $X\subseteq G$, and
define a graphon $W_G$ as $W_G(x,y)=W(\psi(x),\psi(y))$ and
a graphon $W'_F$ as $W'_F(x,y)=W_F(f_G(\psi(x)),f_G(\psi(y)))$.
Observe that $W'_F(x,y)=W_0(g(\psi(x)),g(\psi(y)))$ for almost every $(x,y)\in G\times G$.
Note that the second constraint in Figure~\ref{fig-rec-graphon} implies that $d(\Gamma_4,W_G)=d(\Gamma_4,W_F)$,
which is equal to $d(\Gamma_4,W'_F)$.
We now show that $W_G$ and $W'_F$ are equal almost everywhere.

Suppose that $\cut{W_G-W'_F}=\varepsilon>0$; note that $\varepsilon\le 1$.
For $d\in\NN_0$, define a graphon $W^d$ to be a step graphon with parts
$\psi^{-1}\left(f_G^{-1}\left(I^d(k)\right)\right)$, $k=0,\ldots,2^d-1$, such that
$$W^d(x,y)=\delta(d,s,t) \mbox{ for } x\in\psi^{-1}\left(f_G^{-1}\left(I^d(s)\right)\right)\mbox{ and }
                                         y\in\psi^{-1}\left(f_G^{-1}\left(I^d(t)\right)\right)\mbox{.}$$
The sequence $(W^d)_{d\in\NN_0}$ forms a martingale on $[0,1]^2$, and
Doob's Martingale Convergence Theorem implies that $W^d$ converges uniformly to $W'_F$.
Hence, there exists $d\in\NN_0$ such that $\cut{W'_F-W^d}\le\varepsilon^4/1800$.
Apply Proposition~\ref{prop-regularity} to the graphon $W_G$ and
the partition $\psi^{-1}\left(f_G^{-1}\left(I^d(k)\right)\right)$, $k\in\{0,\ldots,2^d-1\}$,
to obtain a step graphon $W'_G$ that refines $W^d$ and is $\varepsilon^4/1800$-close to $W_G$.
Consequently, we get $\cut{W'_G-W^d}\ge\varepsilon-\varepsilon^4/900\ge\varepsilon/2$,
which implies that 
\begin{equation}
d(\Gamma_4,W'_G)-d(\Gamma_4,W^d)\ge\varepsilon^4/128\label{eq-approx-C4}
\end{equation}
by Lemma~\ref{lm-C4}.
On the other hand, the choice of $W'_G$ and $W^d$ implies that
\begin{equation}
  \left|d(\Gamma_4,W_G)-d(\Gamma_4,W'_G)\right|\le\varepsilon^4/300\mbox{ and}
  \left|d(\Gamma_4,W'_F)-d(\Gamma_4,W^d)\right|\le\varepsilon^4/300\;\mbox{.}\label{eq-approx}
\end{equation}  
The inequalities (\ref{eq-approx-C4}) and (\ref{eq-approx}) now yield that $d(\Gamma_4,W'_F)>d(\Gamma_4,W_G)$.
However, this is impossible since $d(\Gamma_4,W'_F)=d(\Gamma_4,W_G)$.
Hence, the graphons $W_G$ and $W'_F$ are equal almost everywhere,
which implies that $W(x,y)$ and $W_0(g(x),g(y))$ are equal for almost every $(x,y)\in G\times G$.

\section{Cleaning up}
\label{sec-white}

We are close to finishing the description and the argument that the graphon $W_0$ is finitely forcible.
Let us start with the remaining tiles between the parts $A,\ldots,G$.
Fix $(X,Y)$ to be one of the pairs $(B,G)$, $(C,G)$, or $(D,G)$
and define $W_0(\eta_X(x),\eta_Y(y))=0$ for all $(x,y)\in [0,1)^2$.
Clearly, the first decorated constraint in Figure~\ref{fig-deg-empty} forces $W$
to be equal to zero for almost every $(x,y)\in X\times Y$.
Hence, we can conclude that $W(x,y)=W_0(g(x),g(y))$ for almost every pair $(x,y)\in (A\cup\cdots\cup G\cup P)^2$.

Similarly, we define $W_0(\eta_Q(x),\eta_Q(y))=1$ for all $(x,y)\in [0,1)^2$;
this is easy to force by the second constraint in Figure~\ref{fig-deg-empty}.
Hence, $W(x,y)=W_0(g(x),g(y))$ for almost every pair $(x,y)\in Q\times Q$.

\begin{figure}[htbp]
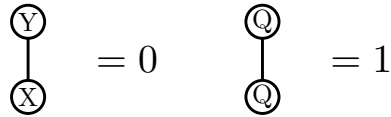

\begin{center}
\epsfbox{ffuniversal-133} \hskip 10mm
\epsfbox{ffuniversal-132.mps}
\end{center}
\caption{The decorated constraint forcing the tiles $X\times Y$,
         where $(X,Y)$ is one of the pairs $(B,G)$, $(C,G)$ and $(D,G)$, and
	 the decorated constraint forcing the tile $Q\times Q$.\label{fig-deg-empty}}
\end{figure}

\subsection{Degree balancing}
\label{sec-degrees}

We use the tiles $Q\times X$, where $X\in\{A,\ldots,G\}\cup\{P\}$, to guarantee that
$$\deg_{W_0}^{A_0\cup\cdots\cup G_0\cup P_0\cup Q_0}(x)=\frac{5}{13}$$
for every vertex $x\in A_0\cup\cdots\cup G_0\cup P_0$.
It may seem counterintuitive to force the degrees of the vertices in the parts $A,\ldots,G,P$ to be equal;
however, it is simpler to begin by enforcing the parts to be degree-regular (with the same degree) and
then enforce the different degrees of the parts.

First, note that $\deg_{W_0}^{A_0\cup\cdots\cup G_0\cup P_0}(x)\le\frac{5}{8}$ for every $x\in A_0\cup\cdots\cup G_0\cup P_0$;
this follows from that $|N_{W_0}^{A_0\cup\cdots\cup G_0\cup P_0}(x)|\le 5/14$ for every such $x$.
Let $\xi(x)=1-\frac{8}{5}\cdot\deg_{W_0}^{A_0\cup\cdots\cup G_0\cup P_0}(x)$ for every such $x$; note that $\xi(x)\in [0,1]$.
We define $W_0(x,y)=\xi(x)$ for every $x\in A_0\cup\cdots\cup G_0\cup P_0$ and $y\in Q_0$.
Further, we define $W_0(x,y)=1$ for all $(x,y)\in Q_0^2$.

\begin{figure}[htbp]
\begin{center}
\epsfbox{ffuniversal-129.mps} \hskip 10mm
\epsfbox{ffuniversal-128.mps}
\end{center}
\caption{The decorated constraints forcing the tiles $Q\times X$
         where $X\in\{A,\ldots,G\}\cup\{P\}$. \label{fig-deg-cons}}
\end{figure}

Fix $X\in\{A,\ldots,G\}\cup\{P\}$ and consider the decorated constraints given in Figure~\ref{fig-deg-cons}.
The first constraint implies that almost every $z$ and $z'$ from $Q$ satisfy that
$$\int\limits_{X}W(z,x)W(z',x)\dd x=\int\limits_{X_0}\xi(x)^2\dd x\;\mbox{.}$$
Lemma~\ref{lm-typ-pairs} implies that almost every $z$ from $Q$ satisfies that
$$\int\limits_{X}W(z,x)^2\dd x=\int\limits_{X_0}\xi(x)^2\dd x\;\mbox{.}$$
In particular, when $z$ is fixed and $W(z,x)$ is viewed as a function of $x$,
the $L_2$-norm of the function $W(z,x)$ for almost every $z\in Q$ is the same, and
the inner product of the functions $W(z,x)$ and $W(z',x)$ for almost every pair $z,z'\in Q$ is also the same and
equal to the $L_2$-norm.
Hence, the Cauchy-Schwarz Inequality yields that
there exists a function $h:X\to [0,1]$ such that
$W(z,x)=h(x)$ for almost every $x\in X$ and almost every $z\in Q$.
It follows that $W(x,z)=h(x)$ for almost every pair $(x,z)\in X\times Q$.

Since $\deg_W^{A\cup\cdots\cup G\cup P}(x)=\deg_{W_0}^{A_0\cup\cdots\cup G_0\cup P_0}(g(x))$ for almost every $x\in X$,
the second constraint in Figure~\ref{fig-deg-cons} implies that $h(x)=\xi(g(x))$ for almost every $x\in X$.
It follows that $W(x,y)=W_0(g(x),g(y))$ for almost every pair $(x,y)\in X\times Q$.
We now conclude that $W(x,y)=W_0(g(x),g(y))$ for almost every pair $(x,y)\in (A\cup\cdots\cup G\cup P\cup Q)^2$.

\subsection{Degree distinguishing}
\label{sec-degdist}

It remains to define and analyze the tiles $X\times R$, $X\in\{A,\ldots,G,P,Q,R\}$.
Fix $(X,k)$ to be one of the pairs $(A, 0), \ldots, (G,6), (P,7), (Q,8) (R, 9)$.
We define $W_0(x,y)=k/18$ for all $x\in X_0$ and $y\in R_0$.
It is easy to check that each vertex of $X_0$ has the same degree in $W_0$, and
this degree is equal to the one given in Table~\ref{tab-degrees}.
Also note that the common degree of the vertices of $Q_0$ is at least $5/14+8/252$.

\begin{figure}[htbp]
\begin{center}
\epsfbox{ffuniversal-130.mps} \hskip 10mm
\epsfbox{ffuniversal-131.mps}
\end{center}
\caption{The decorated constraints used to force the structure of the tiles $X\times R$ 
where $(X, k) \in \{(A, 0), \ldots, (G,6), (P,7), (Q,8), (R, 9)\}$.\label{fig-deg-dist}}
\end{figure}

Consider the two constraints given in Figure~\ref{fig-deg-dist}.
The first constraint implies that it holds for almost every $x\in X$ that
$$\frac{1}{|R|}\int\limits_{R}W(x,y)\dd y=\frac{k}{18}\;\mbox{,}$$
and the second constraint in Figure~\ref{fig-deg-dist} implies that 
it holds for almost every pair $(x,x')\in X^2$ that
$$\frac{1}{|R|}\int\limits_{R}W(x,y)W(x',y)\dd y=\left(\frac{k}{18}\right)^2\;\mbox{.}$$
We conclude using Lemma~\ref{lm-typ-pairs} that it holds that
$$\frac{1}{|R|}\int\limits_{R}W(x,y)^2\dd y=\left(\frac{k}{18}\right)^2$$
for almost every $x\in X$.
The Cauchy-Schwarz Inequality now yields that $W(x,y)=k/18$ for almost every pair $(x,y)\in X\times R$.
We can now conclude that $W(x,y)=W_0(g(x),g(y))$ for almost every $(x,y)\in X\times R$.

We have shown that if a graphon $W$ satisfies the presented decorated constraints,
then $W(x,y)=W_0(g(x),g(y))$ for almost every $(x,y)\in [0,1]^2$.
Since all the presented decorated constraints are satisfied by $W_0$ and
they can be turned into ordinary constraints by Lemma~\ref{lm-decorated},
the proof of Theorem~\ref{thm-main} is now finished.

\section{Conclusion}

The only constraints used to force the structure of the graphon $W_0$ that
depend on the graphon $W_F$ are the last constraint in Figure~\ref{fig:inputTile},
the last constraint in Figure~\ref{fig-rec-graphon} and the first constraint in Figure~\ref{fig-deg-cons}.
In each of the three constraints,
the structure of the graphon $W_F$ influences the numerical value of the right side of the constraint only.
Hence, Theorem~\ref{thm-main} holds in the following stronger form.

\begin{theorem}
\label{thm-main-stronger}
There exist graphs $H_1,\ldots,H_m$ with the following property.
For every graphon $W_F$,
there exist a graphon $W_0$ and reals $\delta_1,\ldots,\delta_m\in [0,1]$ such that
$W_F$ is a subgraphon of $W_0$ that is formed by a $1/14$ fraction of the vertices of $W_0$ and
the graphon $W_0$ is the only graphon $W$, up to a weak isomorphism, such that $d(H_i,W)=\delta_i$ for all $i\in [m]$.
\end{theorem}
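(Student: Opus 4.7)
The plan is to follow the proof of Theorem~\ref{thm-main} and audit, constraint by constraint, precisely where the graphon $W_F$ enters. The conclusion section already identifies that exactly three of the forcing constraints depend on $W_F$: the last constraint in Figure~\ref{fig:inputTile}, the last constraint in Figure~\ref{fig-rec-graphon}, and the first constraint in Figure~\ref{fig-deg-cons}. In each case the dependence is only through a numerical value on one side of the constraint, namely the real $r$ encoding the dyadic-square densities of $W_F$, the densities $\delta(d,s,t)$ appearing on the right side of the $G\times G$ constraint, and the quantity $\int_{X_0}\xi(x)^2\dd x$ used to balance the degrees through part $Q$. All other constraints, including those produced by Lemma~\ref{lm-partition} and Lemma~\ref{lm-decorated}, involve graphs and coefficients determined purely by the combinatorial skeleton of $W_0$ and are therefore independent of $W_F$. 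The degree of part $Q$ itself depends on $W_F$, but only as a numerical parameter of Lemma~\ref{lm-partition}, so the associated partition constraints still use a fixed list of graphs.

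First I would make this tracking completely explicit. Enumerate the finite list of decorated constraints appearing in Sections~\ref{sec-initial}--\ref{sec-white}, apply Lemma~\ref{lm-decorated} to convert each into an ordinary constraint, and append the constraints from Lemma~\ref{lm-partition} forcing the partition into the prescribed sizes and degrees. Each resulting constraint can then be rewritten in the form
\[
P\bigl(d(H_1,W),\ldots,d(H_k,W)\bigr)=c,
\]
where $P$ is a polynomial whose coefficients do not depend on $W_F$, the graphs $H_j$ do not depend on $W_F$, and only the constant $c$ may depend on $W_F$. Let $H_1,\ldots,H_m$ be the union of all graphs appearing in any such constraint; by construction this list is determined solely by the skeleton of $W_0$ and is independent of $W_F$.

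Now define $\delta_i:=d(H_i,W_0)$ for each $i\in[m]$. If a graphon $W$ satisfies $d(H_i,W)=\delta_i$ for every $i$, then for every forcing constraint we have
\[
P\bigl(d(H_1,W),\ldots,d(H_k,W)\bigr)=P(\delta_1,\ldots,\delta_k)=P\bigl(d(H_1,W_0),\ldots,d(H_k,W_0)\bigr)=c,
\]
the last equality because $W_0$ itself satisfies the constraint. Hence $W$ satisfies the entire forcing system, and the proof of Theorem~\ref{thm-main} shows that $W$ is weakly isomorphic to $W_0$. This yields the statement of Theorem~\ref{thm-main-stronger}.

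The main obstacle is the purely bookkeeping verification that every constraint in the argument really does have the claimed form, with all $W_F$-dependence confined to the constant $c$. This is routine for the constraints displayed in the figures, but requires a careful look at the conversions provided by Lemmas~\ref{lm-partition} and~\ref{lm-decorated}: one needs to observe that the graphs they produce are determined by the structure of the decorated constraint and by the part sizes and degrees of the partition, and in particular never depend on the values of densities in $W_F$. Once this is confirmed, the theorem follows by the bookkeeping argument above with no further work.
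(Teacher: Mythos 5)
Your proposal is correct and takes essentially the same approach as the paper: you identify the same three $W_F$-dependent constraints, note that $W_F$ enters each only through a numerical constant, observe that the degree of $Q$ varies only as a parameter to Lemma~\ref{lm-partition}, and conclude. You simply spell out the bookkeeping (constraints as polynomials in graph densities with $W_F$-independent polynomials and $W_F$-dependent constants, with $\delta_i := d(H_i,W_0)$) more explicitly than the paper does.
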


The construction presented in the proof of Theorem~\ref{thm-main}
can be viewed as a map from the space of all graphons to the space of finitely forcible graphons.
The particular map implied by the proof of Theorem~\ref{thm-main} is not continuous with respect to the cut norm topology (and
we have not attempted to achieve this property).
However, the following weaker statement can be of possible use.
The statement easily follows from the proof of Theorem~\ref{thm-main}
since the $L_1$-distance between the functions defining the graphons $W_0$ and $W'_0$
is at most $\varepsilon$ for a suitable value of $k$.

\begin{proposition}
\label{prop-semicontinuous}
For every $\varepsilon>0$, there exists $k\in\NN$ such that the following holds.
If $W_F$ and $W'_F$ are two graphons such that the densities of their dyadic squares of sizes at least $2^{-k}$
agree up to the first $k$ bits after the decimal point in the standard binary representation,
then the cut distance between the finitely forcible graphons $W_0$ and $W'_0$
containing $W_F$ and $W'_F$, respectively, that are constructed in the proof of Theorem~\ref{thm-main},
is at most $\varepsilon$.
\end{proposition}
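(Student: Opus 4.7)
The plan is to bound the cut norm $\cut{W_0-W'_0}$ directly, since the cut distance is dominated by the cut norm. First I would identify the tiles of $W_0$ whose definition actually depends on $W_F$: inspecting Sections~\ref{sec-BxN-tile}, \ref{sec-DxM-tile}, \ref{sec-rec-tile}, and~\ref{sec-degrees}, these are the tile $G\times G$ (which equals $W_F$ rescaled), the tile $B\times F$ (encoding the binary digits of the real $r$ determined by the dyadic densities of $W_F$), the tile $D\times E$ (which also uses the bits of $r$), and the tile $Q\times G$ (through $\xi$, which on $G_0$ depends on $\deg_{W_F}$), together with their symmetric partners. All other tiles coincide in $W_0$ and $W'_0$ and contribute zero to the difference.

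Next I would translate the hypothesis into two controls: (i) a cut-norm bound $\cut{W_F-W'_F}\le 5\cdot 2^{-k}$, and (ii) agreement of the first $N(k)\ge\binom{k+3}{4}$ bits of $r$ and $r'$. For (i), approximate any $A,B\subseteq[0,1]$ by dyadic unions $A',B'$ of scale $2^{-k}$ with $|A\triangle A'|,|B\triangle B'|\le 2^{-k}$; on $A'\times B'$ the sum over at most $2^{2k}$ dyadic cells contributes at most $2^{-k}$ in absolute value, and the boundary error is at most $4\cdot 2^{-k}$. For (ii), any bit index $j=\varphi(d,s,t,p)+1$ with $d\le k$, $s,t<2^d$, and $p<k$ is determined by the first $k$ bits of $\delta(d,s,t)$; all indices $j\le\binom{k+3}{4}$ correspond to quadruples with $d+s+t+p\le k$ and hence satisfy these bounds.

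Then I would bound each tile's contribution. The $G\times G$ tile contributes at most $|G_0|^2\cut{W_F-W'_F}=O(2^{-k})$ by change of variables. The $B\times F$ tile differs only where $f_F(y)\in I_j$ with $j\ge N(k)$, a set of measure $O(2^{-N(k)})$, giving contribution $O(2^{-N(k)})$; the $D\times E$ tile is handled analogously since the support of disagreement is again of measure $O(2^{-N(k)})$. For $Q\times G$, the difference $\xi(y)-\xi'(y)$ for $y\in G_0$ equals $-\tfrac{8}{5}|G_0|\bigl(\deg_{W_F}(y')-\deg_{W'_F}(y')\bigr)$, because the contributions to $\deg^{A_0\cup\cdots\cup G_0\cup P_0}$ from the other parts are $W_F$-independent; hence $\left|\int_B(\xi-\xi')\right|$ for any $B\subseteq G_0$ is bounded by $\tfrac{8}{5}|G_0|^2\cut{W_F-W'_F}$, contributing $O(2^{-k})$ to the cut norm. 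Summing all tile contributions gives $\cut{W_0-W'_0}\le C\cdot 2^{-k}+C'\cdot 2^{-N(k)}$, which can be made at most $\varepsilon$ by choosing $k$ large enough.

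The main obstacle is that the hypothesis does \emph{not} give an $L_1$ bound between $W_F$ and $W'_F$ on $G\times G$, only a cut-norm bound (two graphons can have matching dyadic averages at every prescribed scale and yet differ significantly in $L_1$). Consequently the contributions of the $G\times G$ and $Q\times G$ tiles cannot be controlled pointwise or in $L_1$ and require the cut-norm argument sketched above; the remaining $W_F$-dependent tiles are benign because there the disagreement is supported on a set of small measure, so a direct estimate suffices.
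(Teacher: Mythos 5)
There is a genuine gap in step~(i), and it is serious enough that it also calls the proposition itself into question.

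You claim that $\cut{W_F-W'_F}\le 5\cdot 2^{-k}$, via approximating $A,B\subseteq[0,1]$ by dyadic unions $A',B'$ at scale $2^{-k}$ with $|A\triangle A'|,|B\triangle B'|\le 2^{-k}$. No such approximation exists for general measurable $A$: a set that intersects every dyadic cell of scale $2^{-k}$ in exactly half its measure has symmetric difference at least $1/2$ from \emph{every} union of scale-$2^{-k}$ dyadic intervals. More to the point, the hypothesis controls only the integrals $\int_{I^d(s)\times I^d(t)}(W_F-W'_F)$ for $d\le k$, and this says nothing about $\int_{A\times B}(W_F-W'_F)$ for $A,B$ oscillating inside the cells. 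Concretely, let $f(x)=1-2(\lfloor 2^{k+1}x\rfloor\bmod 2)\in\{\pm 1\}$ and set $W_F=\tfrac{1+f(x)f(y)}{2}$, $W'_F\equiv 1/2$. Every dyadic square density $\delta(d,s,t)$ with $d\le k$ equals $1/2$ for both graphons, so the hypothesis holds, yet taking $A=B=\{f=1\}$ gives $\int_{A\times B}(W_F-W'_F)=1/8$, so $\cut{W_F-W'_F}\ge 1/8$ independently of $k$. Since $W'_F$ is constant, $\delta_\square(W_F,W'_F)=1/8$ as well, and the $G\times G$ tile alone then forces the cut distance between $W_0$ and $W'_0$ to be bounded below by a positive constant (of order $|G_0|^2/8$), no matter how large $k$ is.

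This means your main estimate for the $G\times G$ and $Q\times G$ contributions cannot be rescued in the cut norm any more than in $L_1$; the obstacle you correctly identify for $L_1$ already applies to the cut norm. For what it is worth, the paper's own justification is the single sentence asserting that the $L_1$-distance between $W_0$ and $W'_0$ is at most $\varepsilon$, which is also defeated by the example above ($\|W_F-W'_F\|_1=1/2$). So you have, perhaps unintentionally, located a real problem: under the hypothesis as literally stated, neither the paper's one-line $L_1$ argument nor your cut-norm replacement controls the $G\times G$ block, and the proposition appears to require a strengthened hypothesis (e.g.\ a direct $L_1$ or cut-norm bound on $W_F-W'_F$, or agreement of dyadic densities at all scales together with some uniform tail control). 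The remaining parts of your write-up — the identification of the $W_F$-dependent tiles and the bit-agreement analysis for $r$ — are on the right track, although you also omit the tiles $Q\times B$, $Q\times D$, $Q\times E$, $Q\times F$, which depend on $W_F$ through $\xi$ since $\deg^{F_0}$ on $B_0$ equals $r$ and the $D\times E$ and $B\times F$ blocks feed into $\deg^{A_0\cup\cdots\cup P_0}$ on $D_0,E_0,F_0$; those contributions are small but need to be accounted for.
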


\section*{Acknowledgment}

The authors would like to thank Anton Bernshteyn, Andrzej Grzesik, Tom\'a\v s Kaiser and Jordan Venters
for stimulating discussions on the topics covered in this paper.
The authors would also like to thank the anonymous referee for many detailed comments that
helped to improve the presentation of the result and to clarify several arguments used in its proof.

\newcommand{\advances}{Adv. Math. }
\newcommand{\annals}{Ann. of Math. }
\newcommand{\cpc}{Combin. Probab. Comput. }
\newcommand{\dcg}{Discrete Comput. Geom. }
\newcommand{\discrete}{Discrete Math. }
\newcommand{\eur}{European J.~Combin. }
\newcommand{\gfa}{Geom. Funct. Anal. }
\newcommand{\jcta}{J.~Combin. Theory Ser.~A }
\newcommand{\jctb}{J.~Combin. Theory Ser.~B }
\newcommand{\rsa}{Random Structures Algorithms }
\newcommand{\sidma}{SIAM J.~Discrete Math. }
\newcommand{\tcs}{Theoret. Comput. Sci. }

\end{document}